\newtheorem{thm}{Theorem}[section]
\newtheorem{lem}[thm]{Lemma}
\theoremstyle{definition}
\newtheorem{rmk}{Remark}
\DeclareMathOperator{\supp}{supp}
\DeclareMathOperator{\dd}{div}
\newcommand{\pp}{\partial}
\newcommand{\va}{\varphi}
\newcommand{\la}{\lambda}
\newcommand{\low}{I_{\text{low}}}
\title[%	short title
Inverse Problems for Fractional Diffusion Equations
]{% 		title
Inverse problems for a half-order time-fractional diffusion equation in arbitrary dimension by Carleman estimates
}
\author[X. Huang]{Xinchi Huang}
\address{%
Department of Mathematical Sciences, 
The University of Tokyo, 
Komaba, Meguro, Tokyo, 153-8914, Japan%
}%
\email{huangxc@ms.u-tokyo.ac.jp}
\author[A. Kawamoto]{Atsushi Kawamoto}
\address{%
Department of Mathematical Sciences, 
The University of Tokyo, 
Komaba, Meguro, Tokyo, 153-8914, Japan%
%/
%Faculty of Engineering, Takushoku University 
%Tatemachi, Hachioji, Tokyo, Japan%
}%
\email{kawamo@ms.u-tokyo.ac.jp%
%/kawamoto@la.takushoku-u.ac.jp%
}
\date{}
\begin{document}
%WWWWWWWWWWWWWWWWWWWWWWWWWWWWWWWWWWWWWWW
%
%	Abstract
%
%WWWWWWWWWWWWWWWWWWWWWWWWWWWWWWWWWWWWWWW
\begin{abstract}
We consider a half-order time-fractional diffusion equation in an arbitrary dimension 
and investigate inverse problems of determining the source term or the diffusion coefficient 
from spatial data at an arbitrarily fixed time under some additional assumptions.   
We establish the stability estimate of Lipschitz type in the inverse problems and 
the proofs are based on the Bukhgeim-Klibanov method by using Carleman estimates. 
%In this article we also derive the Carleman estimate for half-order time fractional diffusion equations. 
\end{abstract}
%WWWWWWWWWWWWWWWWWWWWWWWWWWWWWWWWWWWWWWW

\maketitle

%\,\hfill last update: \today, version: \jobname

%SSSSSSSSSSSSSSSSSSSSSSSSSSSSSSSSSSSSSSSSSSSSSSSSSSSSSSSSSSSSS
%
%	Introduction
%
%SSSSSSSSSSSSSSSSSSSSSSSSSSSSSSSSSSSSSSSSSSSSSSSSSSSSSSSSSSSSS
\section{Introduction}
\subsection{Notations and inverse problems}
%-------------------------------------------------------------
%
% 	Setting
%
%-------------------------------------------------------------

Let $n \in\mathbb{N}$. 
We consider an $n$-dimensional bounded domain $\Omega \subset \mathbb{R}^n$ with 
sufficiently smooth boundary $\pp\Omega$ (e.g., of $C^4$-class). 
Let $T>0$ and
set $Q=\Omega \times (0,T)$. 
Henceforth we use notations 
$\pp_t= \frac{\pp}{\pp t}$, $\pp_i=\frac{\pp}{\pp x_i}$, $i=1,2,\ldots, n$ 
, $\nabla = (\partial_1, \ldots, \partial_n)$ and $\pp_\nu =\nu \cdot \nabla$ 
where $\nu =\nu (x)$ is the outward unit normal vector to $\pp\Omega$ at $x$.

We consider the time-fractional diffusion equation of half-order in time:
%with the zero initial condition:
\begin{equation}
\label{eq:fde0}
\begin{aligned}
& \pp_t^{\frac12} u(x,t) - A u(x,t)  = g(x,t), &&\quad (x,t)\in Q,%\\
%&u(x,0) = 0, &&\quad x\in \Omega,
\end{aligned}
\end{equation}
where $A$ is the following elliptic operator:
\begin{equation*}
Au(x,t) = \dd (a(x)\nabla u(x,t)), \quad (x,t)\in Q,
\end{equation*} 
with $a \in C^3(\overline{\Omega})$ satisfying $\frac1{\mu}<a(x)<\mu$, $x\in \overline{\Omega}$ for a positive constant $\mu$,  
and $\pp_t^{\frac12}$ denotes the half-order Caputo type derivative defined by
\begin{equation*}
\pp_t^{\frac12}u(x,t) :=
\frac{1}{\Gamma\left(\frac12 \right)}\int_0^t\frac{\pp_\tau u(x,\tau)}{(t-\tau )^{\frac12}}
\,d\tau,
\quad (x,t)\in Q.
\end{equation*}

In this article, we investigate the following two inverse problems. 
Let $t_0 \in (0,T)$ be an arbitrarily fixed time.
We assume that $g(x,t)=f(x)R(x,t)$, $(x,t)\in Q$. 

\noindent
\textbf{Inverse Source Problem (ISP)}\\
Can we determine the spatially varying factor $f$ in the source of the half-order time-fractional diffusion equation \eqref{eq:fde0} from spatial data $u(\cdot, t_0)$? 

\noindent
\textbf{Inverse Coefficient Problem (ICP)}\\ 
Can we determine the diffusion coefficient $a$ of the half-order time-fractional diffusion equation \eqref{eq:fde0} from spatial data $u(\cdot, t_0)$?

\subsection{Motivation}
%-------------------------------------------------------------
%
%	Background
%
%-------------------------------------------------------------

The equation \eqref{eq:fde0} appears 
as a macroscopic model describing the anomalous diffusion phenomena in heterogeneous media. 
Anomalous diffusion naturally arises from environmental issues 
such as soil contaminations.
Indeed, it has been shown by Adams and Gelhar \cite{Adams-Gelhar92} that the diffusion phenomena in the heterogeneous aquifer 
are not well explained by the classical diffusion equations. % (see \cite{Adams-Gelhar92}). 
As a microscopic model of anomalous diffusion, 
Hatano and Hatano \cite{Hatano-Hatano98} introduced the continuous time random walk (CTRW) model. 
And it is known that fractional diffusion equations can be derived from the CTRW model (see e.g., \cite{Metzler-Klafter00}). 

%-------------------------------------------------------------
%
%	Motivation
%
%-------------------------------------------------------------
Moreover, our work is highly motivated by the problems of 
identifying the source or the mass diffusivity, which are important for the prediction of the contamination. 
As a theoretical work, we mainly focus on the stability issues of such inverse problems in this article. 
%Our research are motivated 
%to identify the source or the mass diffusivity of contamination 
%by investigating the inverse source and coefficient problems. 
%In this article, we mainly discuss the stability in our inverse problems.  

\subsection{Known results}
%-------------------------------------------------------------
%
% Previous researches
%
%-------------------------------------------------------------

Owing to the practical background, inverse problem for fractional diffusion equations has been a hot topic and there are many theoretical and numerical researches on it. For instance, we refer to the survey papers \cite{Liu-Li-Yamamoto19, Liu-Yamamoto19} and the references therein. 

However, as for the stability in such inverse problems, there are only several results. In one-dimensional case (i.e., $n=1$), Yamamoto and Zhang \cite{Yamamoto-Zhang12} established the stability estimate of H\"older type in some inverse problems for a half-order time fractional diffusion equation (see also \cite{Ren-Xu14}). Later Lipschitz type stability estimates in inverse source problems were given by Kawamoto \cite{Kawamoto18}. For multi-dimensional case ($n\ge 2$), Huang, Li and Yamamoto \cite{Huang-Li-Yamamoto19} recently succeeded in deriving a H\"older type stability in an inverse source problem for a special multi-term time-fractional diffusion equation including first-order time derivative. 

To the authors' best knowledge, our work is the first stability result in inverse problems for multi-dimensional fractional diffusion equations without adding a first-order time derivative. 

%On the Carleman estimate for fractional diffusion equations, 
%we may refer to some pioneer works. 
%Xu, Cheng and Yamamoto \cite{Xu-Cheng-Yamamoto11} firstly derived the Carleman estimate 
%for one-dimensional half-order single term time fractional diffusion equations. 
%On other works for the Carleman estimate for $1$-D half-order time fractional diffusion equations and the forth order parabolic equations, see \cite{Kawamoto18} and references therein. 
%Recently, Huang, Li and Yamamoto succeeded in deriving 
%the Carleman estimate for the multi-dimensional multi-term time fractional diffusion equations 
%including the first-order time derivative. 
%Related to our results, Guerrero and Kassab \cite{Guerrero-Kassab19} established the Carleman estimate for the forth-order parabolic equations in multi-dimensional case in space. 

%As the most relevant  to our inverse problems, 
%there are some previous researches. 
%Yamamoto and Zhang \cite{Yamamoto-Zhang12} established the stability estimate of H\"older type in inverse source and coefficient problems for $1$-D half-order time fractional diffusion equations. 
%Lipschitz type stability estimates in inverse source problems were shown in \cite{Kawamoto18}. 
%Since it is a hot topic to study inverse problems for fractional diffusion equations, 
%there are many studies on this topic. 
%Therefore, we refer to articles \cite{Liu-Li-Yamamoto19, Liu-Yamamoto19} in the handbook for fractional differential equations by A. N. Kochubei and Y. Luchko, where many previous researches are introduced.  

\subsection{Key method}

%-------------------------------------------------------------
%
% 	Method
%
%-------------------------------------------------------------
In order to obtain stability estimates in the inverse problems, 
we adopt the Bukhgeim-Klibanov (BK) method introduced in \cite{Bukhgeim-Klibanov81}. 
As for this method, we may refer to some monographs and survey papers \cite{Klibanov92, Klibanov13, Klibanov-Timonov04}. 
%In the method, the Carleman estimate is a key tool, 
%which is an $L^2$-weighted a priori estimate for partial differential equations. 
%There are various applications in the Carleman estimate itself. 
Actually BK method is based on so-called Carleman estimate which is an $L^2$-weighted {\it a priori} estimate for a partial differential equation and is a powerful tool in dealing with inverse problems and control problems.
For more details on the Carleman estimate, we refer to e.g., \cite{Fursikov-Imanuvilov96, Isakov06, Yamamoto09}. 

%-------------------------------------------------------------
%
% 	Importance
%
%-------------------------------------------------------------

A Carleman estimate for a half-order time fractional diffusion equation was firstly derived by Xu, Cheng and Yamamoto \cite{Xu-Cheng-Yamamoto11} in one dimension. Here we follow their idea and reduce the equation \eqref{eq:fde0} to a multi-dimensional fourth-order partial differential equation. By establishing the Carleman estimate for this fourth-order differential equation, we manage to extend the results in \cite{Xu-Cheng-Yamamoto11} to a multi-dimensional version. As immediate applications of this Carleman estimate, we prove the stability results for our inverse problems by the BK method. 

We should mention that the first Carleman estimate for a fourth-order parabolic equation with constant coefficients in multi-dimension was derived by Guerrero and Kassab in \cite{Guerrero-Kassab19}. In the following context, required by the inverse coefficient problem, we modify their proof and establish the Carleman estimate in a more complicated case of variable coefficients. 

%In this article, 
%we derive a Carleman estimate 
%by reducing our equation \eqref{eq:fde0} to
%a fourth-order parabolic equation. 
%Currently, the Carleman estimate for the multi-dimensional half-order fractional diffusion equation \eqref{eq:fde0} is not yet known. 
%Furthermore, our inverse problems have not being solved based on the Bukhgeim-Klibanov method. 
%Therefore, we establish the stability estimates of Lipschitz type in our inverse problems by using our Carleman estimate. 

The rest of this article is organized as follows: 
%Section 1 is this section devoted to Introduction. 
In Section 2, we state the main results on the proposed inverse problems. 
Section 3 is devoted to the reduction from \eqref{eq:fde0} to a fourth-order partial differential equation and we also describe the Carleman estimate, which is a key tool in the proofs of our main results in Section 4. 
Finally, the proof of our key Carleman estimate is given in Section 5. 

%SSSSSSSSSSSSSSSSSSSSSSSSSSSSSSSSSSSSSSSSSSSSSSSSSSSSSSSSSSSSS
%
%	Main Results on Inverse Problems
%
%SSSSSSSSSSSSSSSSSSSSSSSSSSSSSSSSSSSSSSSSSSSSSSSSSSSSSSSSSSSSS
\section{Main Results on Inverse Problems}

We choose $\delta>0$ such that $0< t_0-\delta<t_0<t_0+\delta< T$ and 
denote the time interval around $t_0$ by $I_\delta = (t_0-\delta,t_0+\delta)$. 

Taking $\Omega_0$ as a  sub-domain of $\Omega$ such that $\Omega_0\Subset\Omega$, that is, $\overline{\Omega_0} \subset \Omega$,  
we set $D=\Omega\setminus\overline{\Omega_0}$, 
then $D$ is a sub-domain around the boundary $\pp\Omega$. 

Define a function space $\mathcal{U}$ by
\begin{equation*}
%\mathcal{U} =  C\left([0,T] ;H^4 (\Omega)\right) \cap C^1\left([0,T] ;H^2(\Omega)\right)
\mathcal{U} := H^1(I_\delta; H^5(\Omega))\cap H^2(I_\delta; H^1(\Omega)).
\end{equation*}
Here and henceforth, $H^k(X) = W^{k,2}(X)$, $k\in \mathbb{N}$ where $W^{k,p}(X)$, $k,p\in \mathbb{N}$ denotes the Sobolev space on $X\subset (0,T)$ (or $X\subset \Omega$). 
%\note{%
%$\mathcal{U}=L^2(0,T;H^4(\Omega))$\\
%$\cap W^{2,1}(0,T;H^2(\Omega))$\\
%$\cap H^1(0,T;L^2(\Omega))$%
%}

%-------------------------------------------------------------
%
%	Stability in an Inverse Source Problem
%
%-------------------------------------------------------------
\subsection{Stability in Inverse Source Problem}
%In addition to \eqref{eq:fde0} with $g(x,t)=f(x)R(x,t)$, $(x,t)\in Q$, 
%henceforth, we assume that zero initial condition:
Let $g(x,t)=f(x)R(x,t)$, $(x,t)\in Q$ in \eqref{eq:fde0}. 
In addition, we assume the zero initial condition:
\begin{equation}
\label{eq:ini0}
u(x,0)=0, \quad x\in\Omega,
\end{equation}
and we suppose that 
\begin{equation}
\label{eq:bd0}
u(x,t)=0,\quad (x,t)\in (D\cup\pp\Omega)\times I_\delta.  
\end{equation}
%instead of boundary conditions. 

We assume that 
\begin{equation}
\label{eq:assumption_R}
%R\in C^1\left([0,T];C^2(\overline{\Omega})\right),\quad
%\pp_t^{\frac12} R \in C^1\left([0,T];C(\overline{\Omega})\right),\quad
R\in W^{2,\infty}(0,T; W^{2,\infty}(\Omega)), \quad
\left|R(x,t_0)\right|>0, \quad x\in \overline{\Omega}.
\end{equation}

Now we are ready to state our main result. 
\begin{thm}
\label{thm:isp}
We assume that $u \in \mathcal{U}$ satisfies \eqref{eq:fde0}, \eqref{eq:ini0}, \eqref{eq:bd0} and 
$R$ satisfies \eqref{eq:assumption_R}. Then there exists a constant $C>0$ such that 
\begin{equation}
\label{eq:isp}
\left\| f \right\|_{H^2(\Omega)} 
\leq 
C\left\| u(\cdot, t_0) \right\|_{H^4(\Omega)} 
\end{equation}
for any $f\in H^2(\Omega)$. 
\end{thm}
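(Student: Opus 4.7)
My plan is to follow the Bukhgeim--Klibanov scheme, working with the fourth-order reduction of \eqref{eq:fde0} and the Carleman estimate for $\pp_t-A^2$ announced in Section~5.

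First I would apply $\pp_t^{1/2}+A$ to both sides of \eqref{eq:fde0}, using $\pp_t^{1/2}\pp_t^{1/2}u=\pp_t u$ (valid under \eqref{eq:ini0}), to obtain the fourth-order parabolic equation
\begin{equation}\label{eq:plan-4th}
\pp_t u-A^2 u = f\,\pp_t^{1/2}R+A(fR)\qquad\text{in }Q,
\end{equation}
with \eqref{eq:bd0} preserved for $u$ and inherited by $v:=\pp_t u$. Evaluating \eqref{eq:plan-4th} at $t=t_0$ gives the pointwise elliptic identity
\[
\mathcal L f\;:=\;f\,\pp_t^{1/2}R(\cdot,t_0)+A\bigl(fR(\cdot,t_0)\bigr)\;=\;v(\cdot,t_0)-A^2 u(\cdot,t_0).
\]
The operator $\mathcal L$, viewed as acting on $f$, has principal part $a(x)R(x,t_0)\Delta f$ and is uniformly elliptic by \eqref{eq:assumption_R} and the coercivity of $a$. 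Standard elliptic regularity (combined with the boundary behaviour of $f$ forced by \eqref{eq:bd0} through \eqref{eq:fde0}) yields
\begin{equation}\label{eq:plan-ell}
\|f\|_{H^2(\Omega)}\le C\bigl(\|v(\cdot,t_0)\|_{L^2(\Omega)}+\|u(\cdot,t_0)\|_{H^4(\Omega)}\bigr).
\end{equation}

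The crux is then to bound $\|v(\cdot,t_0)\|_{L^2}$ by $\|u(\cdot,t_0)\|_{H^4}$ up to a term absorbable in $\|f\|_{H^2}$. Differentiating \eqref{eq:plan-4th} in $t$, $v$ satisfies
\[
\pp_t v-A^2 v = f\,\pp_t^{3/2}R+A(f\,\pp_t R)\qquad\text{in }\Omega\times I_\delta,
\]
whose right-hand side is dominated pointwise by $C\bigl(|f|+|\nabla f|+|\Delta f|\bigr)$ thanks to \eqref{eq:assumption_R}. I would apply the Carleman estimate of Section~5 to $v$, with a weight $\va(x,t)$ whose $t$-maximum is attained at $t_0$ and is strictly smaller at $t_0\pm\delta$, and then combine it with an integration-by-parts of $\pp_t\bigl(|v|^2 e^{2s\va}\bigr)$ on $[t_0-\delta,t_0]$ and $[t_0,t_0+\delta]$. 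The result should be an inequality of the form
\[
\int_\Omega|v(x,t_0)|^2 e^{2s\va(x,t_0)}\,dx\;\le\;\frac{C}{s^{\alpha}}\int_\Omega\bigl(|f|^2+|\nabla f|^2+|\Delta f|^2\bigr)e^{2s\va(x,t_0)}\,dx+C e^{-cs}\|f\|_{H^2(\Omega)}^2
\]
for some $\alpha>0$ and all $s\ge s_0$, the exponentially small factor controlling the endpoint contributions at $t_0\pm\delta$ via an a priori energy estimate for $v$. Plugging this into the weighted version of \eqref{eq:plan-ell}, taking $s$ large enough that the $\|f\|_{H^2}$ terms on the right are absorbed into the left, and then dropping the now-fixed weights on $\overline\Omega$ produces \eqref{eq:isp}.

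The main obstacle is this BK step: choosing $\va$ compatibly with the Carleman estimate of Section~5 and arranging the time integration-by-parts so as to extract a genuine $1/s^\alpha$ gain (after accounting for the powers of $s$ in the left-hand side of the fourth-order Carleman inequality) while making the endpoint contributions at $t_0\pm\delta$ exponentially small in $s$. All other ingredients---the reduction, the elliptic bound for $f$, and the final absorption---are relatively standard once this key weighted inequality is in place.
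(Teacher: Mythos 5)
Your overall architecture is the paper's: reduce \eqref{eq:fde0} to the fourth-order equation, apply the Carleman estimate of Section~5 to $v=\pp_t u$, read the equation at $t=t_0$ as an elliptic equation for $f$, and absorb via the large parameter $s$. However, two steps as you state them do not work, and the step you flag as ``the main obstacle'' is exactly the one you have not supplied.

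First, the elliptic step. The identity $\mathcal Lf=v(\cdot,t_0)-A^2u(\cdot,t_0)$ cannot be inverted by ``standard elliptic regularity'': that only gives $\|f\|_{H^2}\le C(\|\mathcal Lf\|_{L^2}+\|f\|_{L^2})$, and the zeroth-order coefficient $\bigl(AR+\pp_t^{1/2}R+R(\cdot,0)/\sqrt{\pi t_0}\bigr)/R$ at $t=t_0$ has no sign, so $0$ may be an eigenvalue and the $\|f\|_{L^2}$ term cannot be dropped. More importantly, your absorption argument needs a \emph{weighted} lower bound with an explicit power of $s$, and ``the weighted version of \eqref{eq:plan-ell}'' is not a consequence of elliptic regularity --- it is a separate Carleman estimate for the second-order elliptic operator (Lemma~\ref{lem:ce_elliptic1} in the Appendix), which produces $s^{-1}\int_\Omega\sum_{|\alpha|\le2}|\pp_x^\alpha f|^2e^{2s\psi(\cdot,t_0)}\,dx$ on the left. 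The absorption then works precisely because the BK step yields the factor $s^{-5/2}\ll s^{-1}$. Without this lemma your scheme has no parameter against which to absorb the $f$-terms.

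Second, the reduction and the BK inequality. The identity $\pp_t^{1/2}\pp_t^{1/2}u=\pp_t u$ is false for the Caputo derivative even under \eqref{eq:ini0}; the correct reduction (Lemma~\ref{lem:prelimi}) carries the extra source $g(x,0)/\sqrt{\pi t}=f(x)R(x,0)/\sqrt{\pi t}$. This is harmless structurally (it is again $f$ times known data, and its $t$-derivative enters $\pp_tG$), but it must be kept. As for the key weighted bound on $\int_\Omega|v(x,t_0)|^2e^{2s\psi(x,t_0)}\,dx$: you propose a bounded weight maximal at $t_0$ with ``exponentially small'' endpoint contributions, but the Carleman estimate of Section~5 is proved for the singular weight $\psi=(e^{\la d}-e^{2\la\|d\|_{C(\overline\Omega)}})/\bigl((t-t_0+\delta)(t_0+\delta-t)\bigr)$, which tends to $-\infty$ at $t_0\pm\delta$; with it the endpoint terms vanish identically, no a priori energy estimate is needed, and writing $\int_\Omega|v(t_0)|^2e^{2s\psi(t_0)}\,dx=\int_{t_0-\delta}^{t_0}\pp_t(\cdots)\,dt$ together with the splitting $|v||\pp_tv|\le Cs^{-7/2}\va^{-1}|\pp_tv|^2+Cs^{7/2}\va|v|^2$ yields the gain $s^{-5/2}$ against the terms $\frac1{s\va}|\pp_tv|^2+s^6\va^6|v|^2$ controlled by Theorem~\ref{thm:ce0}. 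This computation is the heart of the proof and must be carried out; as written your argument leaves it open.
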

\begin{rmk}
In order to clarify our key idea and avoid some complicated calculations on the boundary integrals, 
here we assume strong assumption (3), that is, the solution vanishes in a neighborhood of the boundary 
$\partial\Omega$ over the time interval $I_\delta$. 
According to the proofs in the later sections, it is readily to see that without assuming (3), we can prove the stability estimate with an additional boundary measurement as follows:
$$
\left\| f \right\|_{H^2(\Omega)} 
\leq 
C\left\| u(\cdot, t_0) \right\|_{H^4(\Omega)} 
+ C\sum_{|\alpha|=0}^3 \|\partial_x^\alpha u\|_{H^1(I_\delta; L^2(\partial\Omega))}.
$$
Here and henceforth, we let $\partial_x^\alpha = \partial_1^{\alpha_1}\ldots\partial_n^{\alpha_n}$ for $\alpha = (\alpha_1, \ldots, \alpha_n)\in \mathbb{N}^n$ and $|\alpha| = \sum_{j=1}^n \alpha_j$. 
In fact, we conjecture that by treating the boundary integrals generated by integration by parts suitably, we could prove 
$$
\left\| f \right\|_{H^2(\Omega)} 
\leq 
C\left\| u(\cdot, t_0) \right\|_{H^4(\Omega)} 
+ C\|\partial_\nu u\|_{H^1(I_\delta; H^2(\Gamma))}
$$
for arbitrarily fixed sub-boundary $\Gamma\subset \partial\Omega$ 
provided that we assume homogeneous Dirichlet boundary condition instead of (3). 
\end{rmk}

%-------------------------------------------------------------
%
%	Stability in an Inverse Coefficient Problem
%
%-------------------------------------------------------------
\subsection{Stability in Inverse Coefficient Problem}

Let $u_0$ and $h$ be given functions. 
For $k=1,2$, we consider 
\begin{equation}
\label{eq:sys0}
\left\{
\begin{aligned}
\pp_t^{\frac12} u^{(k)}(x,t) - A^{(k)} u^{(k)}(x,t)  &= g(x,t), 	&(x,t)\in Q,\\
u^{(k)}(x,0)  &=u_0(x),  &x\in\Omega,\\
u^{(k)}(x,t)   &=h(x,t),  &(x,t)\in (D\cup\pp\Omega)\times I_\delta,  
\end{aligned}
\right.
\end{equation}
where 
\begin{equation*}
A^{(k)} u^{(k)}(x,t) =\dd (a^{(k)}(x)\nabla u^{(k)}(x,t)), \quad (x,t)\in Q,
\end{equation*} 
and 
$a^{(k)} \in C^4(\overline{\Omega})$ satisfying 
\begin{equation}
\label{eq:assumption_a}
\frac1{ \mu_0}< a^{(k)}(x) < \mu_0, \quad x\in\overline{\Omega}, 
\quad \text{and}\quad
\|a^{(k)}\|_{C^3(\overline{\Omega})} \leq M, 
\end{equation}
with some positive constants $\mu_0$ and $M$. 

By denoting $u(x,t)=u^{(1)}(x,t)-u^{(2)}(x,t), 
a(x)=a^{(1)}(x) - a^{(2)}(x), r(x,t)= u^{(2)}(x,t)$, $(x,t)\in Q$, we obtain from \eqref{eq:sys0} that
\begin{equation}
\label{eq:sys1}
\left\{
\begin{aligned}
\pp_t^{\frac12} u (x,t) - A^{(1)} u(x,t)  &= \dd (a(x) \nabla r(x,t)), 	&(x,t)\in Q,\\
u(x,0)&=0,	 &x\in\Omega,\\
u(x,t)&=0, 	&(x,t)\in (D\cup\pp\Omega)\times I_\delta,  
\end{aligned}
\right.
\end{equation}

In preparation to state the main result  on our inverse coefficient problem, 
we introduce the following weight functions. %of our Carleman estimate. 
Set $Q_\delta=\Omega \times I_\delta$ and   
%where $\Lambda_0=(t_0-\delta,t_0+\delta)$. 
%Choose an arbitrarily fixed sub-domain $\omega$ 
%such that $\omega \Subset D$. We take the distance function 
we take $d \in C^4(\overline{\Omega})$ such that
\begin{equation*}
d(x) > 0, \ x\in \Omega, \quad |\nabla d(x)| \ge \sigma, \ x\in\overline{\Omega}
\end{equation*}
where $\sigma>0$ is a positive constant. The existence of such function 
$d$ was shown, for example, in \cite{Fursikov-Imanuvilov96, Imanuvilov95, Imanuvilov-Yamamoto98}. 

Then we define $\va,\psi$ as follows:
\begin{equation*}
\va(x,t) := \frac{e^{\la d(x)}}{(t-t_0+\delta)(t_0+\delta-t)}, \quad 
\psi(x,t) := \frac{e^{\la d(x)}-e^{2\la \|d\|_{C(\overline{\Omega})}}}{(t-t_0+\delta)(t_0+\delta-t)}, \quad (x,t) \in Q_\delta.
\end{equation*}

Before stating the stability result, we assume that $r\in W^{2,\infty}(I_\delta; W^{5,\infty}(\Omega))$ such that
%
%\note{regularity in space: +1\\
%We need the regularity when we use Lemma5.3. }
\begin{equation}
\label{eq:assumption_r}
%\left\{
%\begin{aligned}
%& r \in C\left([0,T);C^3(\overline{\Omega})\right) \cap C\left((0,T); C^5(\overline{\Omega})\right) \cap C^2 \left((0,T);C^4(\overline{\Omega})\right), 
%\\&\pp_t^{\frac12} r \in C\left((0,T);C^3(\overline{\Omega})\right) \cap C^2\left((0,T);C^2(\overline{\Omega})\right), 
%\\&\text{and}\quad |\nabla r(x,t_0)\cdot \nabla d(x)| \geq m, \quad x \in\overline{\Omega_0}, 
%\end{aligned}
%\right.
\|r\|_{W^{2,\infty}(I_\delta; W^{5,\infty}(\Omega))} \le M, \quad 
|\nabla r(x,t_0)\cdot \nabla d(x)| \ge r_0, \ x\in \overline{\Omega_0}
\end{equation}
with a positive constant $r_0>0$. 

Now we state the second main result. 
\begin{thm}
\label{thm:icp}
We assume that $u^{(k)} \in \mathcal{U}$, $k=1,2$ satisfy \eqref{eq:sys0}. 
We suppose that 
$a^{(1)}, a^{(2)}\in C^4(\overline{\Omega})$ satisfy \eqref{eq:assumption_a}  
with $a^{(1)}= a^{(2)}$, $x\in D\cup\pp \Omega$
and $r=u^{(2)}$ satisfies \eqref{eq:assumption_r}. 
Then there exists a constant $C>0$ such that 
\begin{equation}
\label{eq:icp}
\left\| a^{(1)}-a^{(2)} \right\|_{H^3(\Omega)} 
\leq 
C\left\| u(\cdot, t_0) \right\|_{H^5(\Omega)}. 
\end{equation}
\end{thm}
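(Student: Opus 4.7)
The approach follows the Bukhgeim--Klibanov scheme applied to a fourth-order reformulation of the equation for $u = u^{(1)}-u^{(2)}$, together with the Carleman estimate of Section 3. First, I would apply $\pp_t^{\frac12}$ to \eqref{eq:sys1} and substitute $\pp_t^{\frac12}u = A^{(1)}u+\dd(a\nabla r)$ into the resulting right-hand side. Using $(\pp_t^{\frac12})^2 = \pp_t$ for Caputo derivatives with vanishing initial data, this produces the fourth-order equation
\begin{equation*}
\pp_t u - (A^{(1)})^2 u \;=\; A^{(1)}\bigl(\dd(a\nabla r)\bigr) + \pp_t^{\frac12}\bigl(\dd(a\nabla r)\bigr)
\end{equation*}
on $Q$, with $u(\cdot,0)=0$ and $u=0$ on $(D\cup\pp\Omega)\times I_\delta$. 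Because $a^{(1)}=a^{(2)}$ on $D\cup\pp\Omega$, the difference $a$ vanishes near $\pp\Omega$, which localizes the argument to $Q_\delta$ and kills the spatial boundary contributions in the Carleman identity.

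Next, set $v:=\pp_t u$. Differentiating in $t$ and commuting $A^{(1)}$ through the time derivatives gives
\begin{equation*}
\pp_t v - (A^{(1)})^2 v \;=\; A^{(1)}\bigl(\dd(a\nabla\pp_t r)\bigr) + \pp_t^{\frac32}\bigl(\dd(a\nabla r)\bigr),
\end{equation*}
with $v$ still vanishing on $(D\cup\pp\Omega)\times I_\delta$. Applying the Carleman estimate of Section 3 to $v$---and to $\pp_x^\alpha v$ for $|\alpha|=1$, which is required for the $H^1$-recovery in the next step---yields a weighted $L^2(Q_\delta)$ bound on $v$ and its first spatial derivatives, whose right-hand side is pointwise controlled by $\sum_{|\beta|\le 3}|\pp_x^\beta a|$ thanks to \eqref{eq:assumption_r}.

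The Bukhgeim--Klibanov step exploits the identity
\begin{equation*}
w(x,t_0)^2\, e^{2s\psi(x,t_0)} \;=\; -\int_{t_0}^{t_0+\delta}\pp_t\bigl(w^2 e^{2s\psi}\bigr)\,dt,
\end{equation*}
which holds because $e^{2s\psi}\to 0$ as $t\to t_0+\delta$. Taking $w=\pp_x^\alpha v$ for $|\alpha|\le 1$, combining with Cauchy--Schwarz and the Carleman bound, controls the weighted $H^1$-norm of $v(\cdot,t_0)$ (with weight $e^{s\psi(\cdot,t_0)}$) by the weighted integral of $\sum_{|\beta|\le 3}|\pp_x^\beta a|^2$ over $Q_\delta$. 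Evaluating the reduced equation at $t=t_0$ gives
\begin{equation*}
v(\cdot,t_0) - (A^{(1)})^2 u(\cdot,t_0) \;=\; A^{(1)}\bigl(\dd(a\nabla r(\cdot,t_0))\bigr) + \pp_t^{\frac12}\bigl(\dd(a\nabla r)\bigr)(\cdot,t_0);
\end{equation*}
its principal part in $a$ is the third-order transport-type term $a^{(1)}\,\nabla r(\cdot,t_0)\cdot\nabla(\Delta a)$, which is thereby bounded in weighted $H^1$ by $\|u(\cdot,t_0)\|_{H^5(\Omega)}$ plus the Carleman right-hand side. Using the non-degeneracy $|\nabla r(\cdot,t_0)\cdot\nabla d|\ge r_0$ and $\Delta a=0$ on $D$ (inherited from $a=0$ on $D\cup\pp\Omega$), a transport/energy argument along $\nabla r(\cdot,t_0)$ yields an $H^1$-bound on $\Delta a$, and elliptic regularity with homogeneous Dirichlet data on $\pp\Omega$ lifts this to $\|a\|_{H^3(\Omega)}$. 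The remaining lower-order $a$-contributions are then absorbed by taking $s$ large, giving \eqref{eq:icp}.

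The main obstacle I anticipate is the non-ellipticity of $A^{(1)}(\dd(a\nabla r))$ in $a$: its spatial symbol carries the factor $\xi\cdot\nabla r(\cdot,t_0)$, so $\Delta a$ must be recovered by a transport (or Carleman-for-transport) argument sustained by the non-degeneracy hypothesis rather than by direct elliptic inversion. A secondary but delicate technical issue is that the nonlocal terms $\pp_t^{\frac12}(\dd(a\nabla r))(\cdot,t_0)$ and $\pp_t^{\frac32}(\dd(a\nabla r))$ depend on the entire history of $r$ on $(0,t_0+\delta)$, and one must verify that their weighted contributions are of strictly lower order in the Carleman parameter $s$ so that the final absorption step closes.
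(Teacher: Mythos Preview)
Your outline follows the paper's Bukhgeim--Klibanov strategy closely, but three points deserve correction or sharpening.

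First, the fourth-order reduction is not quite as you state it. The identity $(\pp_t^{1/2})^2=\pp_t$ holds for $u$ because $u(\cdot,0)=0$, but applying $\pp_t^{1/2}$ to the right-hand side $g=\dd(a\nabla r)$ picks up an initial-value correction since $g(\cdot,0)$ need not vanish. The paper's Lemma~3.1 gives
\[
(\pp_t-(A^{(1)})^2)u=(\pp_t^{1/2}+A^{(1)})g+\frac{g(x,0)}{\sqrt{\pi t}},
\]
and the extra term produces the $\nabla r(x,0)/\sqrt{\pi t}$ and $\Delta r(x,0)/\sqrt{\pi t}$ contributions visible in the paper's explicit $H(x,t)$. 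These are harmless for the estimates, but your derivation as written is incorrect.

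Second, the paper does \emph{not} apply the Carleman estimate separately to $\pp_x^\alpha v$ for $|\alpha|=1$; doing so would require handling commutators $[\pp_i,(A^{(1)})^2]$ with variable coefficients. Instead, the paper applies Theorem~3.2 once to $y=\pp_t u$ and exploits the $|\nabla y|^2$ and $|\Delta y|^2$ terms already present on the left of that estimate: writing $\int_\Omega|\nabla y(\cdot,t_0)|^2e^{2s\psi(\cdot,t_0)}\,dx$ as a $t$-integral and integrating by parts in $x$ converts $\nabla y\cdot\nabla\pp_t y$ into $(\Delta y)(\pp_t y)$ and $(\nabla y\cdot\nabla\psi)(\pp_t y)$, all of which are controlled by the single Carleman inequality with a gain of $s^{-1/2}$.

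Third, and most important for closing the argument, your ``transport/energy argument along $\nabla r(\cdot,t_0)$ followed by elliptic regularity'' is too vague to deliver what the absorption step needs: a \emph{weighted} estimate of the form
\[
s\int_\Omega\sum_{|\alpha|\le 3}|\pp_x^\alpha a|^2 e^{2s\psi(\cdot,t_0)}\,dx
\le C\int_\Omega\bigl(|H(\cdot,t_0)|^2+|\nabla H(\cdot,t_0)|^2\bigr)e^{2s\psi(\cdot,t_0)}\,dx
+C\int_\Omega\sum_{|\alpha|\le 3}|\pp_x^\alpha a|^2 e^{2s\psi(\cdot,t_0)}\,dx.
\]
The factor $s$ on the left is what allows the last term to be absorbed. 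The paper obtains this from a dedicated Carleman estimate for the third-order operator $\mathbf{p}\cdot\nabla\Delta$ (Lemma~5.3 in the Appendix, with $\mathbf{p}=a^{(1)}\nabla r(\cdot,t_0)$ and the non-degeneracy $|\mathbf{p}\cdot\nabla d|\ge r_0$). A plain transport energy estimate for $\Delta a$ followed by unweighted elliptic regularity will not carry the weight $e^{2s\psi}$ or the extra power of $s$, so the loop would not close. If you intend a Carleman estimate for transport composed with one for $\Delta$, that is essentially what Lemma~5.3 packages; you should invoke it explicitly.
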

\begin{rmk}
The second condition in \eqref{eq:assumption_r} is a mathematical assumption. 
It is common to impose such assumption while one intends to recover the coefficients 
under some differential operators such as the thermal conductivity in heat equation 
(see e.g., Theorem 6.1 in \cite{Yamamoto09}). 
\end{rmk}

%SSSSSSSSSSSSSSSSSSSSSSSSSSSSSSSSSSSSSSSSSSSSSSSSSSSSSSSSSSSSS
%
%	Carleman Estiamte
%
%SSSSSSSSSSSSSSSSSSSSSSSSSSSSSSSSSSSSSSSSSSSSSSSSSSSSSSSSSSSSS
\section{Carleman Estiamte}

Our main idea to derive the Carleman estimate for \eqref{eq:fde0} is
the reduction from the fractional diffusion equation to an integer-order equation, 
that is, a fourth-order partial differential equation. 
Although $\pp_t =\pp_t^{\frac12}\pp_t^{\frac12}$ does not hold for the Caputo type fractional derivative,  
we have the following lemma.
\begin{lem}
\label{lem:prelimi}
Assume that $u,\partial_t^{\frac12}u \in W^{1,1}(0,T; H^2(\Omega))$ 
satisfies \eqref{eq:fde0} and \eqref{eq:ini0}. 
Then we have the following equation
\begin{equation}
\label{eq:4thpara}
\left(\pp_t - A^2\right) u(x,t) = \left(\pp_t^{\frac{1}{2}}+A\right) g(x,t) + \frac{g(x,0)}{\sqrt{\pi t}}, 
\quad (x,t)\in Q.
\end{equation} 
%where 
%\begin{equation*}
%G(x,t)=\left(\pp_t^{\frac{1}{2}}+A\right) g(x,t) + \frac{g(x,0)}{\sqrt{\pi t}},
%\quad (x,t)\in Q.
%\end{equation*}
\end{lem}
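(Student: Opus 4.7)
The plan is to apply the operator $\pp_t^{1/2}$ a second time to equation \eqref{eq:fde0} and combine a ``semigroup'' relation for iterated Caputo derivatives with the commutation of $\pp_t^{1/2}$ and the spatial operator $A$. Specifically, I introduce the auxiliary function $v(x,t) := \pp_t^{1/2} u(x,t)$. Under the assumed regularity $v \in W^{1,1}(0,T; H^2(\Omega))$, the function $v$ has a well-defined trace at $t=0$, and equation \eqref{eq:fde0} provides pointwise the identity $v = Au + g$ on $Q$. Taking the trace at $t=0$ together with the initial condition \eqref{eq:ini0} yields $v(x,0) = Au(x,0) + g(x,0) = g(x,0)$.

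Next I evaluate $\pp_t^{1/2} v$ in two different ways. Applying $\pp_t^{1/2}$ to the identity $v = Au + g$ and exchanging $A$ with the one-dimensional convolution defining the Caputo integral (which is legitimate by Fubini given the Bochner regularity $u \in W^{1,1}(0,T; H^2(\Omega))$) gives
\begin{equation*}
\pp_t^{1/2} v = A\pp_t^{1/2} u + \pp_t^{1/2} g = A(Au + g) + \pp_t^{1/2} g = A^2 u + Ag + \pp_t^{1/2} g.
\end{equation*}
On the other hand, using $v = \pp_t^{1/2} u$ together with $u(x,0) = 0$ and the Laplace transform identities $\mathcal{L}\{\pp_t^{1/2} w\}(s) = s^{1/2}\mathcal{L}\{w\}(s) - s^{-1/2} w(x,0)$ and $\mathcal{L}\{1/\sqrt{\pi t}\}(s) = s^{-1/2}$ yields
\begin{equation*}
\pp_t^{1/2} v = \pp_t u - \frac{v(x,0)}{\sqrt{\pi t}} = \pp_t u - \frac{g(x,0)}{\sqrt{\pi t}}.
\end{equation*}
Equating the two expressions for $\pp_t^{1/2} v$ and rearranging yields exactly \eqref{eq:4thpara}.

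The main obstacle is the careful handling of the trace $v(x,0)$. For a smooth $u$ with $u(x,0)=0$ one would naively have $v(x,t) \to 0$ as $t \to 0^+$ from the integral representation of the Caputo derivative, and the correction term would disappear. In the $W^{1,1}(0,T; H^2(\Omega))$ class assumed in the lemma, however, $v$ may have a nonzero trace at $t=0$, and the equation itself forces this trace to equal $g(x,0)$; this is precisely what generates the $g(x,0)/\sqrt{\pi t}$ correction on the right-hand side. The remaining technical points --- justifying the Laplace transform identity in this function-space setting and commuting $A$ with $\pp_t^{1/2}$ --- follow from standard Bochner-integral arguments.
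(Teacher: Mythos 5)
Your proof is correct and follows essentially the same route as the paper, which applies the cofactor $\partial_t^{\frac12}+A$ to \eqref{eq:fde0} (following Lemma 2.3 of Xu--Cheng--Yamamoto) and rests on exactly the two facts you isolate: the composition rule $\partial_t^{\frac12}\partial_t^{\frac12}u=\partial_t u-(\partial_t^{\frac12}u)(\cdot,0)/\sqrt{\pi t}$ and the observation that the equation together with \eqref{eq:ini0} forces the trace $(\partial_t^{\frac12}u)(\cdot,0)=g(\cdot,0)$. The only cosmetic difference is your use of Laplace transforms to justify the composition rule; on the finite interval $(0,T)$ this is more cleanly obtained from the semigroup identity $I^{\frac12}I^{\frac12}=I^{1}$ for the Riemann--Liouville integral plus a single integration by parts, but this does not affect correctness.
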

Here we used $\Gamma(\frac12)=\sqrt{\pi}$. 
This Lemma is a modified version of Lemma 2.3 in \cite{Xu-Cheng-Yamamoto11}. 
Applying the cofactor $\pp_t^{\frac12}+A$ to the equation \eqref{eq:fde0}, 
we can prove the above lemma by the similar argument used in \cite{Xu-Cheng-Yamamoto11}. 
Hereafter we consider the Carleman estimate for the fourth-order partial differential equation \eqref{eq:4thpara}. 

Now we are ready to state our Carleman estimate. 
\begin{thm}
\label{thm:ce0}
There exists $\la_0>0$ such that for any $\la>\la_0$, 
we can choose $s_0(\la)>0$ satisfying the following: 
there exists a constant $C=C(s_0,\la_0)>0$ such that 
\begin{align}
\label{eq:ce0}
&
\int_{Q_\delta} 
		\Biggl(
		\frac1{s\va} \left( |\pp_t u|^2 +|A^2 u|^2\right) 
		+
		s\la^2\va |\nabla A u|^2
		+ 
		s^3\la^4\va^3  |A u|^2
		\\&\qquad
		+ s^2\la^4\va^2  \sum_{i,j=1}^n |\pp_i \pp_j u |^2 
		+ s^4\la^6\va^4 |\nabla  u|^2 
		+ s^6\la^8\va^6 |u|^2
		\Biggr) e^{2s\psi}
\,dxdt
\nonumber\\&\leq
C\int_{Q_\delta} \left|(\pp_t - A^2)u\right|^2 e^{2s\psi} \,dxdt
\nonumber
\end{align}
for all $s\ge s_0$ and 
all $u \in L^2(I_\delta; H^4(\Omega))\cap H^1(I_\delta; L^2(\Omega))$ 
satisfying %\eqref{eq:4thpara} %\eqref{eq:fde0}, \eqref{eq:ini0} 
%with \eqref{eq:bd0}. 
$\supp\, u \subset \Omega \times \overline{I_\delta}$. 
\end{thm}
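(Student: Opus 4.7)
The plan is to adapt the fourth-order parabolic Carleman argument of Guerrero and Kassab~\cite{Guerrero-Kassab19} to the operator $\pp_t-A^2$ in the variable-coefficient setting. Setting $w=e^{s\psi}u$, the support hypothesis on $u$ together with the fact that $\psi\to -\infty$ as $t\to t_0\pm\delta$ ensures that $w$ and all its derivatives vanish on $\pp Q_\delta$, so that subsequent integration by parts will generate no boundary contribution. The conjugated operator is
$$
L_s w := e^{s\psi}(\pp_t - A^2)(e^{-s\psi} w) = \pp_t w - s\psi_t w - \widehat{A}^2 w,
$$
where $\widehat{A}v := e^{s\psi}A(e^{-s\psi}v) = Av - 2sa\nabla\psi\cdot\nabla v - s\,\dd(a\nabla\psi)\,v + s^2 a|\nabla\psi|^2 v$. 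Using $\nabla\psi = \la\va\nabla d$, the full expansion of $\widehat{A}^2$ is a spatial operator of order four whose coefficients are polynomials in $s\la\va$.

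Next I would decompose $L_s = L_s^+ + L_s^-$ into formally self-adjoint and anti-self-adjoint parts in $L^2(Q_\delta)$: $L_s^+$ collects the even-order-in-$\nabla$ contributions from $\widehat{A}^2$ (including $-A^2 w$ and the dominant zero-order terms $-s^4 a^2|\nabla\psi|^4 w$ and $-s\psi_t w$), while $L_s^-$ collects $\pp_t$ together with the odd-order transport-type terms $sa\nabla\psi\cdot\nabla(\cdot)$ and $s^3 a^2|\nabla\psi|^2\nabla\psi\cdot\nabla(\cdot)$. The identity
$$
\|L_s w\|^2_{L^2(Q_\delta)} = \|L_s^+ w\|^2 + \|L_s^- w\|^2 + 2(L_s^+ w, L_s^- w)_{L^2(Q_\delta)}
$$
concentrates the positive estimates in the cross product and in $\|L_s^- w\|^2$. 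After repeated integration by parts in the cross term, using the weight identities $\pp_i\psi = \la\va\,\pp_i d$ and $\pp_i\pp_j\psi = \la^2 \va\,\pp_i d\,\pp_j d + \la\va\,\pp_i\pp_j d$, the principal contributions should reproduce the six positive quantities on the left-hand side of~\eqref{eq:ce0}, written for $w$.

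The error terms generated by $\pp_x^\alpha a$ with $1\le|\alpha|\le 3$ and by mixed derivatives of $\psi$ are each of strictly lower order in $s\la\va$ than the corresponding principal positive term; I would absorb them by a two-step procedure, first choosing $\la\ge\la_0$ large and then $s\ge s_0(\la)$ large. Converting back from $w$ to $u$ via $\nabla^k w = e^{s\psi}(\nabla^k u + \text{lower-order terms in }u\text{ with }(s\la\va)\text{-weights})$ transfers each weighted bound from $w$ to $u$. Finally, the two ``top-order'' terms $(s\va)^{-1}|\pp_t u|^2$ and $(s\va)^{-1}|A^2 u|^2$ are recovered respectively by retaining $\|L_s^- w\|^2$ in the above identity (which supplies the bound on $|\pp_t u|^2$) and by applying the triangle inequality to $A^2 u = \pp_t u - (\pp_t - A^2)u$.

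The hard part will be the cross-product computation in the variable-coefficient case. Already in the constant-coefficient setting of~\cite{Guerrero-Kassab19}, extracting all six weighted positive contributions requires many integrations by parts; each of these now generates further commutator terms involving $\nabla a$, $\nabla^2 a$, $\nabla^3 a$ multiplied by derivatives of $w$ of orders up to three. One must identify, power by power in $s\la\va$, the dominant positive quantities and verify that every remainder is strictly subleading so that the two-step absorption closes. This bookkeeping is precisely what forces the $C^3$ regularity assumption on $a$ and is what Section~5 is devoted to.
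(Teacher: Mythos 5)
Your overall strategy---conjugate with $w=e^{s\psi}u$, split the conjugated operator, expand the $L^2$-norm, extract positivity from the cross product by integration by parts, and absorb remainders with the two large parameters $\la$ and then $s$---is the same as the paper's. Two remarks on the splitting itself: the paper does not use the symmetric/antisymmetric decomposition you propose, but a three-part splitting $Pw=P_1w+P_2w+Rw$ in which $R$ collects lower-order remainders and, moreover, several individual terms of $Pw$ are deliberately split between $P_1$ and $R$ (e.g.\ $-2s\la\va a(\nabla a\cdot\nabla d)\Delta w$ is written as $+2s\la\va a(\nabla a\cdot\nabla d)\Delta w$ in $P_1$ plus $-4s\la\va a(\nabla a\cdot\nabla d)\Delta w$ in $R$); the working inequality is then $\tfrac12\|P_1w\|^2+(P_1w,P_2w)\le\|Pw\|^2+\|Rw\|^2$. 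This is a variant of your identity and your $\|L_s^-w\|^2$ device for recovering $\pp_t u$ corresponds to the paper's retention of $\|P_1w\|^2$, so these differences are cosmetic.

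The genuine gap is your claim that the cross product ``should reproduce the six positive quantities on the left-hand side of \eqref{eq:ce0}.'' It does not. What the integration by parts actually yields (paper's Steps 1--4) are the \emph{directional} quantities $s\la^2\va|\nabla d\cdot\nabla Aw|^2$, $s^3\la^4\va^3|Aw|^2$, $s^5\la^6\va^5|\nabla d\cdot\nabla w|^2$, $s^7\la^8\va^7|w|^2$, together with two \emph{negative} terms, $-16s^3\la^4\va^3a^4|\nabla d|^2|(\nabla d\cdot\nabla)\nabla w|^2$ and $-16s^5\la^6\va^5a^4|\nabla d|^6|\nabla w|^2$, which are not subleading in $s\la\va$ and cannot be absorbed into $\low$. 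The paper needs two extra mechanisms that are absent from your plan: (i) a separate recovery step (Step 5) that regenerates $|(\nabla d\cdot\nabla)\nabla w|^2$ with a positive sign by one more integration by parts against $\nabla d\cdot\nabla Aw$, and a Step 6 in which the passage from $|Aw|^2$ to $|Au|^2$ is arranged so as to exactly cancel the negative $|\nabla w|^2$ term; and (ii) an auxiliary \emph{elliptic} Carleman estimate (Lemma \ref{lem:ce_elliptic2}) applied to $Au$, which is what actually produces the full lower-order terms $s^2\la^4\va^2\sum_{i,j}|\pp_i\pp_j u|^2+s^4\la^6\va^4|\nabla u|^2+s^6\la^8\va^6|u|^2$ (note the loss of one power of $s\va$ relative to the directional terms), plus a final integration by parts to recover $s\la^2\va|\nabla Au|^2$ from $A^2u$ and $Au$. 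Without these ingredients your argument stalls at an estimate containing only degenerate directional derivatives and uncontrolled negative terms, and the stated left-hand side of \eqref{eq:ce0} is not reached.
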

\begin{rmk}
We may prove the above Carleman estimate with 
\begin{equation*}
Au(x,t)=\dd (a(x)\nabla u(x,t))
-\mathbf{b}(x)\cdot \nabla u(x,t)
-c(x)u(x,t), \quad (x,t)\in Q,
\end{equation*}
where 
$a \in C^3(\overline{\Omega})$, 
$\mathbf{b} \in \{C^2(\overline{\Omega})\}^n$, 
and $c \in C^2(\overline{\Omega})$. 
Indeed, the principal term 
is essential in the proof of Carleman estimate and
the lower-order terms could be absorbed by 
choosing sufficiently large parameters $s$ and $\la$. 
\end{rmk}

%SSSSSSSSSSSSSSSSSSSSSSSSSSSSSSSSSSSSSSSSSSSSSSSSSSSSSSSSSSSSS
%
%	Proof of Main Results
%
%SSSSSSSSSSSSSSSSSSSSSSSSSSSSSSSSSSSSSSSSSSSSSSSSSSSSSSSSSSSSS
\section{Proof of Main Results}

%-------------------------------------------------------------
%
%	Proof of Main Result on Inverse Source Problem
%
%-------------------------------------------------------------
\subsection{Proof of Main Result on Inverse Source Problem}
\begin{proof}[Proof of Theorem \ref{thm:isp}]
By Lemma \ref{lem:prelimi}, we have
\begin{equation}
\label{eq:prf1-1}
\left(\pp_t - A^2\right) u(x,t) = \left(\pp_t^{\frac{1}{2}}+A\right) g(x,t) + \frac{g(x,0)}{\sqrt{\pi t}} =: G(x,t) 
, \quad (x,t)\in Q.
\end{equation} 
%with \eqref{eq:fde0} with $g(x,t)=f(x)R(x,t)$ and \eqref{eq:ini0}, 
%we obtain 
Noting that $g(x,t)=f(x)R(x,t)$, we obtain
%\begin{equation}
%\label{eq:prf1-1}
%\left(\pp_t - A^2\right) u(x,t) = H(x,t), \quad (x,t)\in Q,
%\end{equation} 
%where 
\begin{align}
\label{eq:prf1-2}
G(x,t)
&=
R(x,t) (Af)(x)
+
2 a(x)\nabla R(x,t) \cdot \nabla f(x)
\\&\quad
+
\left(
(AR)(x,t)
+
\pp_t^{\frac{1}{2}}R(x,t)
+ \frac{R(x,0)}{\sqrt{\pi t}}
\right)
f(x),
\quad (x,t)\in Q.
\nonumber
\end{align}

Taking weighted $L^2$-norm over $\Omega$ on both sides of \eqref{eq:prf1-1} at $t=t_0$, 
we have the following inequality. 
\begin{align}
\label{eq:prf1-3}
&
\int_\Omega
\left|
G(x,t_0)
\right|^2 e^{2s\psi(x,t_0)}
\,dx
\\&\leq
C
\int_\Omega
\left|
\pp_t u(x,t_0)
\right|^2 e^{2s\psi(x,t_0)}
\,dx
+
C
\int_\Omega
\sum_{|\alpha|\leq 4}
\left|
\pp^\alpha_x u(x,t_0)
\right|^2 e^{2s\psi(x,t_0)}
\,dx. 
\nonumber
\end{align}
Here and henceforth $C>0$ denotes a generic constant which may change line by line but is always independent of parameter $s$. 

First we estimate the first term on the right-hand side of \eqref{eq:prf1-3} 
by using our Carleman estimate (Theorem \ref{thm:ce0}). 
Second we estimate the term on the left-hand side of \eqref{eq:prf1-3} from below 
by the Calreman estimate for elliptic equations (Lemma \ref{lem:ce_elliptic1}). 
%Henceforth we assume that $s>\max\{1,\delta^2\}$ and $\la>0$ are large enough 
%to satisfy the estimates of Carleman type \eqref{eq:prf1-8} and \eqref{eq:prf1-12} 
%which are mentioned later, 
%and then, we fix $\la>0$
Henceforth we choose $s\ge 1$ large enough and fix $\lambda>0$ sufficiently large. 

Set $y=\pp_t u$. 
According to the choice of weight function, we readily see that $e^{2s\psi(x,t)} = 0$ for any $x\in \Omega$ as $t$ goes to $t_0-\delta$, which implies 
\begin{align}
\label{eq:prf1-4}
\int_\Omega
\left|
y (x,t_0)
\right|^2 e^{2s\psi(x,t_0)}
\,dx
&=
\int_{t_0-\delta}^{t_0} 
\pp_t \left( 
\int_\Omega
\left|
y
\right|^2 e^{2s\psi}
\,dx
\right)
\,dt
\\&=
\int_{t_0-\delta}^{t_0} 
\int_\Omega
\left(
2y \pp_t y+ 2s \pp_t \psi |y|^2
\right)
e^{2s\psi}
\,dx
\,dt
\nonumber\\&\leq
C
\int_{Q_\delta} 
|y||\pp_t y| e^{2s\psi}
\,dxdt
+
C
\int_{Q_\delta} 
s \va^2 |y| e^{2s\psi}
\,dxdt
.
\nonumber
\end{align}
Here we used that $\pp_t \psi(x,t)\leq C\va^2(x,t)$, $(x,t)\in Q_\delta$.  
Noting that 
\begin{equation*}
|y||\pp_t y|
=
\left(
s^{-\frac74}\va^{-\frac12} |\pp_t y|
\right)
\left(
s^{\frac74}\va^{\frac12} |y|
\right)
\leq 
Cs^{-\frac72}\va^{-1} |\pp_t y|^2
+
C s^{\frac72}\va |y|^2,  
\end{equation*}
together with $\varphi \le C\varphi^2$ and $s\ge 1$, from \eqref{eq:prf1-4} we obtain
%\begin{align}
%\label{eq:prf1-5}
%&
%\int_\Omega
%\left|
%y (x,t_0)
%\right|^2 e^{2s\psi(x,t_0)}
%\,dx
%\\&\leq
%C \left(
%\int_{Q_\delta}
%\frac{1}{s^{\frac72}\va} 
%|\pp_t y| e^{2s\psi}
%\,dxdt
%+
%\int_{Q_\delta} 
%s^{\frac72}\va^2 |y| e^{2s\psi}
%\,dxdt
%\right)
%\nonumber\\&\leq
%\frac{C}{s^{\frac52}} \left(
%\int_{Q_\delta}
%\frac{1}{s\va} 
%|\pp_t y| e^{2s\psi}
%\,dxdt
%+
%\int_{Q_\delta} 
%s^6\va^6 |y| e^{2s\psi}
%\,dxdt
%\right).
%\nonumber
%\end{align}
\begin{align}
\label{eq:prf1-5}
&\int_\Omega \left|y (x,t_0) \right|^2 e^{2s\psi(x,t_0)}\,dx
\\&\le Cs^{-\frac52} \left( \int_{Q_\delta}\frac{1}{s\va} |\pp_t y|^2 e^{2s\psi} \,dxdt
+\int_{Q_\delta} s^6\va^6 |y|^2 e^{2s\psi}\,dxdt \right).
\nonumber
\end{align}

To estimate the right-hand side of \eqref{eq:prf1-5}, 
we need the Carleman estimate for $y$. 
Differentiating \eqref{eq:prf1-1} in $t$, we have
\begin{equation}
\label{eq:prf1-6}
\left(\pp_t - A^2\right) y(x,t) = \pp_t G(x,t), \quad (x,t)\in Q_\delta,
\end{equation} 
By \eqref{eq:bd0}, we see that $\supp\; y \in \Omega\times \overline{I_\delta}$. 
%\begin{equation}
%\label{eq:prf1-7}
%y(x,t)=0,\quad (x,t)\in (D\cup\pp\Omega)\times \Lambda_0,  
%\end{equation}
Applying Carleman estimate (Theorem \ref{thm:ce0}) 
to the equation \eqref{eq:prf1-6} %with \eqref{eq:prf1-7}, 
yields
\begin{equation}
\label{eq:prf1-8}
\int_{Q_\delta} 
		\left(
		\frac1{s\va} |\pp_t y|^2 
		+ s^6\va^6 |y|^2
		\right) e^{2s\psi}
\,dxdt
\leq
C\int_{Q_\delta} \left|\pp_t G\right|^2 e^{2s\psi} \,dxdt. 
\end{equation}
%Hereafter we assume that $s>1$ is large enough to satisfy the above inequality. 
Putting this together with \eqref{eq:prf1-5}, we have
\begin{equation*}
\int_\Omega
\left|
y (x,t_0)
\right|^2 e^{2s\psi(x,t_0)}
\,dx
\leq
Cs^{-\frac52} 
\int_{Q_\delta} \left|\pp_t G\right|^2 e^{2s\psi} \,dxdt,
\end{equation*}
that is, 
\begin{equation}
\label{eq:prf1-9}
\int_\Omega
\left|
\pp_t u (x,t_0)
\right|^2 e^{2s\psi(x,t_0)}
\,dx
\leq
Cs^{-\frac52} 
\int_{Q_\delta} \left|\pp_t G\right|^2 e^{2s\psi} \,dxdt. 
\end{equation}
Since
\begin{align*}
\pp_t G(x,t)
&=
\pp_tR(x,t) (Af)(x)
+
2 a\nabla \pp_t R(x,t) \cdot \nabla f(x)
\\&\quad
+
\left(
\pp_t (A R)(x,t)
+
\pp_t \pp_t^{\frac{1}{2}}R(x,t)
- \frac{R(x,0)}{2t\sqrt{\pi t}}
\right)
f(x),
\quad (x,t)\in Q_\delta,
\end{align*}
the inequality \eqref{eq:prf1-9} yields 
\begin{align*}
\int_\Omega
\left|
\pp_t u (x,t_0)
\right|^2 e^{2s\psi(x,t_0)}
\,dx
&\leq
Cs^{-\frac52} 
\int_{Q_\delta} 
\sum_{|\alpha|\leq 2}
\left|
\pp^\alpha_x f(x)
\right|^2 e^{2s\psi(x,t)}
\,dxdt
\\&\leq
Cs^{-\frac52} 
\int_{\Omega} 
\sum_{|\alpha|\leq 2}
\left|
\pp^\alpha_x f(x)
\right|^2 e^{2s\psi(x,t_0)}
\,dx. 
\end{align*}
Here we used $\psi(x,t)\leq \psi(x,t_0)$, $(x,t)\in Q_\delta$. 
Combining the above inequality with \eqref{eq:prf1-3}, we obtain
\begin{align}
\label{eq:prf1-10}
&
\int_\Omega
\left|
G(x,t_0)
\right|^2 e^{2s\psi(x,t_0)}
\,dx
\\&\leq
Cs^{-\frac52} 
\int_{\Omega} 
\sum_{|\alpha|\leq 2}
\left|
\pp^\alpha_x f(x)
\right|^2 e^{2s\psi(x,t_0)}
\,dx
+
C
\int_\Omega
\sum_{|\alpha|\leq 4}
\left|
\pp^\alpha_x u(x,t_0)
\right|^2 e^{2s\psi(x,t_0)}
\,dx. 
\nonumber
\end{align}

Next we estimate the left-hand side of \eqref{eq:prf1-10} from below. 
We may regard the equation \eqref{eq:prf1-2} at $t=t_0$: 
\begin{align}
\label{eq:prf1-11}
\frac{G(x,t_0)}{R(x,t_0)}
&=
 (Af)(x)
+
\frac{2 a(x)}{R(x,t_0)} \nabla R(x,t_0) \cdot \nabla f(x)
\\&\quad
+
\frac1{R(x,t_0)}
\left(
(AR)(x,t_0)
+
\pp_t^{\frac{1}{2}}R(x,t_0)
+ \frac{R(x,0)}{\sqrt{\pi t_0}}
\right)
f(x),
\quad x \in \Omega
\nonumber
\end{align}
as a second-order elliptic equation with respect to $f$, 
According to \eqref{eq:fde0} and \eqref{eq:bd0}, we have $f(x)R(x,t)=0$, $(x,t)\in (D\cup\pp\Omega)\times I_\delta$. 
In particular, $f(x)R(x,t_0)=0$, $x\in D\cup\pp\Omega$. 
By the assumption \eqref{eq:assumption_R}, we see that $f(x)=0$, $x\in D\cup\pp\Omega$. 
Hence we may use the Carleman estimate (Lemma \ref{lem:ce_elliptic1}) for \eqref{eq:prf1-11} and we obtain
\begin{equation}
\label{eq:prf1-12}
s^{-1}
\int_\Omega
\sum_{|\alpha|\leq 2}
\left|
\pp^\alpha_x f(x)
\right|^2 e^{2s\psi(x,t_0)}
\,dx
\leq 
C
\int_\Omega
\left|
G(x,t_0)
\right|^2 e^{2s\psi(x,t_0)}
\,dx.
\end{equation}
This, together with \eqref{eq:prf1-10}, indicates
\begin{align*}
&
s^{-1}
\int_\Omega
\sum_{|\alpha|\leq 2}
\left|
\pp^\alpha_x f(x)
\right|^2 e^{2s\psi(x,t_0)}
\,dx
\\&\leq
Cs^{-\frac52} 
\int_{\Omega} 
\sum_{|\alpha|\leq 2}
\left|
\pp^\alpha_x f(x)
\right|^2 e^{2s\psi(x,t_0)}
\,dx
+
C
\int_\Omega
\sum_{|\alpha|\leq 4}
\left|
\pp^\alpha_x u(x,t_0)
\right|^2 e^{2s\psi(x,t_0)}
\,dx. 
\end{align*}
Choosing sufficiently large $s\ge 1$, 
we may absorb the first term on the right-hand side of 
the above inequality into the left-hand side 
and we have
\begin{equation*}
s^{-1}
\int_\Omega
\sum_{|\alpha|\leq 2}
\left|
\pp^\alpha_x f(x)
\right|^2 e^{2s\psi(x,t_0)}
\,dx
\leq
C
\int_\Omega
\sum_{|\alpha|\leq 4}
\left|
\pp^\alpha_x u(x,t_0)
\right|^2 e^{2s\psi(x,t_0)}
\,dx. 
\end{equation*}
Fixing large $s\ge 1$, then $e^{2s\psi(x,t_0)}$ has a positive lower bound, which completes the proof of our stability estimate \eqref{eq:isp}. 
\end{proof}

%-------------------------------------------------------------
%
%	Proof of Main Result on Inverse Coefficient Problem
%
%-------------------------------------------------------------
\subsection{Proof of Main Result on Inverse Coefficient Problem}
\begin{proof}[Proof of Theorem \ref{thm:icp}]
With reference to \eqref{eq:sys1}, by Lemma \ref{lem:prelimi} we have
\begin{equation}
\label{eq:prf2-1}
\left(\pp_t - \left(A^{(1)}\right)^2\right) u(x,t) = H(x,t), \quad (x,t)\in Q,
\end{equation} 
where 
\begin{align}
\label{eq:prf2-2}
H(x,t)
&=
a^{(1)}(x) \nabla r(x,t) \cdot \nabla \Delta a(x) 
+2 a^{(1)}(x) \sum_{i,j=1}^n \left(\pp_i\pp_j r(x,t)\right) \left(\pp_i\pp_j a(x)\right)
\\&\quad
+a^{(1)}(x) \Delta r(x,t) \Delta a(x) 
+\nabla a^{(1)}(x) \cdot (\nabla r(x,t) \cdot \nabla) \nabla a(x)
\nonumber\\&\quad
+
\biggl(
\pp_t^{\frac12}\nabla r(x,t) 
+3a^{(1)}(x) \nabla \Delta r(x,t)
+(\Delta r(x,t))\nabla a^{(1)}(x)
\nonumber\\&\quad\quad
+(\nabla a^{(1)}(x)\cdot \nabla) \nabla r(x,t)
+\frac{\nabla r(x,0)}{\sqrt{\pi t}} 
\biggr)\cdot \nabla a(x)
\nonumber\\&\quad
+
\biggl(
\pp_t^{\frac{1}{2}} \Delta r(x,t)
+\nabla a^{(1)}(x) \cdot \nabla \Delta r(x,t)
\nonumber\\&\quad\quad
+a^{(1)}(x) \Delta^2 r(x,t)
+ \frac{\Delta r(x,0)}{\sqrt{\pi t}}
\biggr)
a(x),
\quad (x,t)\in Q.
\nonumber
\end{align}
We differentiate the equation \eqref{eq:prf2-1} with respect to $x$ in $Q_\delta$ to obtain
\begin{equation}
\label{eq:prf2-3}
\nabla \left(\pp_t - \left(A^{(1)}\right)^2\right) u(x,t) = \nabla H(x,t), \quad (x,t)\in Q_\delta,
\end{equation}
Taking the weighted $L^2$-norm over $\Omega$ on both sides of 
\eqref{eq:prf2-1} and \eqref{eq:prf2-3} at $t=t_0$, we find
\begin{align}
\label{eq:prf2-4}
&
\int_\Omega
\left(
\left|
H(x,t_0)
\right|^2 
+
\left|
\nabla H(x,t_0)
\right|^2 
\right)e^{2s\psi(x,t_0)}
\,dx
\\&\leq
C
\int_\Omega
\left(
\left|
\pp_t u(x,t_0)
\right|^2 
+
\left|
\nabla\pp_t u(x,t_0)
\right|^2 
\right)e^{2s\psi(x,t_0)}
\,dx
\nonumber\\&\quad
+
C
\int_\Omega
\sum_{|\alpha|\leq 5}
\left|
\pp^\alpha_x u(x,t_0)
\right|^2 e^{2s\psi(x,t_0)}
\,dx. 
\nonumber
\end{align}

By setting $y=\pp_t u$, %, $z=\pp_t^2 u$ in $Q$. 
we differentiate the equation \eqref{eq:prf2-1} to obtain % and again. 
\begin{equation}
\label{eq:prf2-5}
%\left\{
%\begin{aligned}
\left(\pp_t - \left(A^{(1)}\right)^2\right) y(x,t) = \pp_t H(x,t), \quad (x,t)\in Q_\delta.%\\
%\left(\pp_t - A^2\right) z(x,t) &= \pp_t^2 \widetilde{H}(x,t), & (x,t)\in Q.
%\end{aligned}
%\right.
\end{equation} 
From \eqref{eq:bd0}, we see that $y$ is compactly supported in $\Omega \times \overline{I_\delta}$. 
%By \eqref{eq:ini0}, we have
%\begin{equation}
%\label{eq:prf2-6}
%y(x,t)=z(x,t)=0,\quad (x,t)\in (D\cup\pp\Omega)\times \Lambda_0,  
%\end{equation}
Then we can employ Theorem \ref{thm:ce0} to \eqref{eq:prf2-5} and we have
\begin{align}
\label{eq:prf2-7}
&\int_{Q_\delta} \left( \frac{1}{s\va} |\partial_t y|^2 + s^2\va^2 |\Delta y|^2+ s^4\va^4 |\nabla y|^2 +s^6\va^6 |y|^2 \right) e^{2s\psi}\,dxdt
\leq
C\int_{Q_\delta} \left|\pp_t H\right|^2 e^{2s\psi} \,dxdt.
%\nonumber 
\end{align}
%Hereafter we assume that $s>\max\{1,\delta^2\}$ and $\la$ are large enough to satisfy the above Carleman estimate \eqref{eq:prf2-7} and the Calreman esitmate \eqref{eq:prf2-10} for $\nabla \Delta a$, and then, we fix $\la>0$. 
Hereafter we assume $s\ge 1$, $\lambda>0$ are sufficiently large such that the Carleman estimates hold true. 

Next we estimate the first term on the right-hand side of \eqref{eq:prf2-4}. Similarly to the proof of Theorem \ref{thm:isp}, we readily find that the weight $e^{2s\psi(\cdot,t)}$ goes to $0$ as $t$ tends to $t_0-\delta$ and thus,
\begin{equation*}
\int_\Omega
\left|
y (x,t_0)
\right|^2 e^{2s\psi(x,t_0)}
\,dx
\leq
Cs^{-\frac52} \int_{Q_\delta} \left(\frac{1}{s\va} |\pp_t y|^2 + s^6\va^6 |y|^2 \right)e^{2s\psi} \,dxdt.
\end{equation*}
With more careful calculations, we estimate
\begin{align*}
\int_\Omega \left|\nabla y(x,t_0)\right|^2 e^{2s\psi(x,t_0)}\,dx 
&= \int_{t_0-\delta}^{t_0} \pp_t \left( \int_\Omega \left|\nabla y\right|^2 e^{2s\psi}\,dx \right)\,dt
\\&=
\int_{t_0-\delta}^{t_0} \int_\Omega 
\left( 2\nabla y\cdot \nabla \partial_t y+ 2s \pp_t \psi |\nabla y|^2\right) e^{2s\psi}
\,dxdt.
%\nonumber\\&\leq
%C
%\int_{Q_\delta} 
%|y||z| e^{2s\psi}
%\,dxdt
%+
%C
%\int_{Q_\delta} 
%s \va^2 |y|^2 e^{2s\psi}
%\,dxdt
%\nonumber\\&\quad
%+
%C
%\int_{Q_\delta} 
%|\nabla y||\nabla z| e^{2s\psi}
%\,dxdt
%+
%C
%\int_{Q_\delta} 
%s \va^2 |\nabla y|^2 e^{2s\psi}
%\,dxdt
%\nonumber\\&\leq
%C
%\int_{Q_\delta} 
%s \va^2 
%\left(
%|y|^2
%+
%|\nabla y|^2
%\right)
% e^{2s\psi}
%\,dxdt
%+
%C
%\int_{Q_\delta} 
%\left(
%|z|^2
%+
%|\nabla z|^2
%\right)
%e^{2s\psi}
%\,dxdt
%\nonumber\\&\leq
%\frac{C}{s^3}
%\left\{
%\int_{Q_\delta} 
%		\left[
%		 s^4\va^4 \left(|\nabla y|^2+|\nabla z|^2\right)
%		 +s^6\va^6 \left(|y|^2+|z|^2\right)
%		\right] e^{2s\psi}
%\,dxdt
%\right\}.
%\nonumber
\end{align*}
Using integration by parts and noting that $\supp\, \partial_t y(\cdot,t) \subset \Omega$ for all $t\in I_\delta$, we find
\begin{align*}
&\int_{t_0-\delta}^{t_0} \int_\Omega (2\nabla y\cdot \nabla \partial_t y) e^{2s\psi}\,dxdt\\
&= 
\int_{t_0-\delta}^{t_0} \int_\Omega \left( -2\Delta y - 4s\nabla y\cdot \nabla \psi\right) (\partial_t y) e^{2s\psi}\,dxdt\\
&\le
C\int_{Q_\delta} \left(|\Delta y||\partial_t y| + s|\nabla y||\partial_t y|\right) e^{2s\psi}\,dxdt\\
&\le 
Cs^{-\frac12} \int_{Q_\delta} 
\left( \frac{1}{s\varphi}|\partial_t y|^2 + s^2\varphi^2|\Delta y|^2 + s^4\varphi^4|\nabla y|^2 \right) e^{2s\psi}
\,dxdt.
\end{align*}
Here we used H\"older's inequality, $\varphi \le C\varphi^2$ and $s\ge 1$ in the last line. Also we can readily check that 
\begin{align*}
&\int_{t_0-\delta}^{t_0} \int_\Omega 2s \pp_t \psi |\nabla y|^2 e^{2s\psi}\,dxdt
\le 
Cs^{-\frac12} \int_{Q_\delta} s^4\varphi^4|\nabla y|^2 e^{2s\psi}\,dxdt. 
\end{align*}
Combining the above inequalities with \eqref{eq:prf2-7}, we see that 
\begin{align}
\label{eq:prf2-8}
&\int_\Omega 
\left( \left|\pp_t u(x,t_0)\right|^2 +\left|\nabla \pp_t u(x,t_0)\right|^2 \right)e^{2s\psi(x,t_0)}
\,dx
\leq
Cs^{-\frac12}
\int_{Q_\delta} \left|\pp_t H\right|^2 e^{2s\psi} \,dxdt. 
\end{align}
Note that 
\begin{align*}
\pp_t H(x,t)
&=
a^{(1)}(x) \nabla \pp_t r(x,t) \cdot \nabla \Delta a(x) 
+2 a^{(1)}(x) \sum_{i,j=1}^n \left(\pp_i\pp_j \pp_t r(x,t)\right) \left(\pp_i\pp_j a(x)\right)
\\&\quad
+a^{(1)}(x) \Delta \pp_t r(x,t) \Delta a(x) 
+\nabla a^{(1)}(x) \cdot (\nabla \pp_t  r(x,t) \cdot \nabla) \nabla a(x)
\\&\quad
+
\biggl(
\pp_t \pp_t^{\frac12}\nabla r(x,t) 
+3a^{(1)}(x) \nabla \Delta \pp_t r(x,t)
+(\Delta \pp_t  r(x,t))\nabla a^{(1)}(x)
\nonumber\\&\quad\quad
+(\nabla a^{(1)}(x)\cdot \nabla) \nabla \pp_t  r(x,t)
-\frac{\nabla r(x,0)}{2t\sqrt{\pi t}} 
\biggr)\cdot \nabla a(x)
\\&\quad
+
\biggl(
\pp_t \pp_t^{\frac{1}{2}} \Delta r(x,t)
+\nabla a^{(1)}(x) \cdot \nabla \Delta \pp_t r(x,t)
\nonumber\\&\quad\quad
+a^{(1)}(x) \Delta^2 \pp_t r(x,t)
- \frac{\Delta r(x,0)}{2t\sqrt{\pi t}}
\biggr)
a(x),
\quad (x,t)\in Q_\delta.
\end{align*}

\iffalse
and
\begin{align}
\nonumber
\pp_t^2 \widetilde{H}(x,t)
&=
a^{(1)}(x) \nabla \pp_t^2 r(x,t) \cdot \nabla \Delta a(x) 
+2 a^{(1)}(x) \sum_{i,j=1} \left(\pp_i\pp_j \pp_t^2 r(x,t)\right) \left(\pp_i\pp_j a(x)\right)
\\&\quad
+a^{(1)}(x) \Delta \pp_t^2 r(x,t) \Delta a(x) 
+\nabla a^{(1)}(x) \cdot (\nabla \pp_t^2  r(x,t) \cdot \nabla) \nabla a(x)
\nonumber\\&\quad
+
\biggl[
\pp_t^2 \pp_t^{\frac12}\nabla r(x,t) 
+3a^{(1)}(x) \nabla \Delta \pp_t^2 r(x,t)
+(\Delta \pp_t^2  r(x,t))\nabla a^{(1)}(x)
\nonumber\\&\quad\quad
+(\nabla a^{(1)}(x)\cdot \nabla) \nabla \pp_t^2  r(x,t)
+\frac{3\nabla r(x,0)}{4t^2\sqrt{\pi t}} 
\biggr]\cdot \nabla a(x)
\nonumber\\&\quad
+
\biggl[
\pp_t^2 \pp_t^{\frac{1}{2}} \Delta r(x,t)
+\nabla a^{(1)}(x) \cdot \nabla \Delta \pp_t^2 r(x,t)
\nonumber\\&\quad\quad
+a^{(1)}(x) \Delta^2 \pp_t^2 r(x,t)
- \frac{3\Delta r(x,0)}{4t^2\sqrt{\pi t}}
\biggr]
a(x),
\quad (x,t)\in Q. 
\nonumber
\end{align}
\fi

Hence we have 
\begin{align*}
&\int_\Omega
\left( \left|\pp_t u(x,t_0)\right|^2 + \left|\nabla \pp_t u(x,t_0)\right|^2 \right)e^{2s\psi(x,t_0)}
\,dx
\\&\leq
Cs^{-\frac12} \int_{Q_\delta} \sum_{|\alpha|\leq 3} |\pp_x^\alpha a(x)| e^{2s\psi(x,t)} \,dxdt
\\&\leq
Cs^{-\frac12} \int_{\Omega} \sum_{|\alpha|\leq 3} |\pp_x^\alpha a(x)| e^{2s\psi(x,t_0)} \,dx.  
\end{align*}
Putting this together with \eqref{eq:prf2-4}, we have
\begin{align}
\label{eq:prf2-9}
&
\int_\Omega
\left(
\left|H(x,t_0)\right|^2 
+
\left|\nabla H(x,t_0)\right|^2 
\right)e^{2s\psi(x,t_0)}
\,dx
\\&\leq
Cs^{-\frac12}
\int_{\Omega} 
\sum_{|\alpha|\leq 3} |\pp_x^\alpha a(x)|
e^{2s\psi(x,t_0)} \,dx
+
C
\int_\Omega
\sum_{|\alpha|\leq 5}
\left|
\pp^\alpha_x u(x,t_0)
\right|^2 e^{2s\psi(x,t_0)}
\,dx. 
\nonumber
\end{align}
We rewrite \eqref{eq:prf2-2} at $t=t_0$ to obtain
\begin{align*}
& a^{(1)}(x) \nabla r(x,t_0) \cdot \nabla \Delta a(x) 
\\&=
H(x,t_0)
-2 a^{(1)}(x) \sum_{i,j=1}^n \left(\pp_i\pp_j r(x,t_0)\right) \left(\pp_i\pp_j a(x)\right)
\\&\quad
-a^{(1)}(x) \Delta r(x,t_0) \Delta a(x) 
-\nabla a^{(1)}(x) \cdot (\nabla r(x,t_0) \cdot \nabla) \nabla a(x)
\\&\quad
-
\biggl(
\pp_t^{\frac12}\nabla r(x,t_0) 
+3a^{(1)}(x) \nabla \Delta r(x,t_0)
+(\Delta r(x,t_0))\nabla a^{(1)}(x)
\\&\quad\quad
+(\nabla a^{(1)}(x)\cdot \nabla) \nabla r(x,t_0)
+\frac{\nabla r(x,0)}{\sqrt{\pi t_0}} 
\biggr)\cdot \nabla a(x)
\\&\quad
-
\biggl(
\pp_t^{\frac{1}{2}} \Delta r(x,t_0)
+\nabla a^{(1)}(x) \cdot \nabla \Delta r(x,t_0)
\\&\quad\quad
+a^{(1)}(x) \Delta^2 r(x,t_0)
+ \frac{\Delta r(x,0)}{\sqrt{\pi t_0}}
\biggr) a(x),
\quad x\in \Omega. 
\end{align*}
By the assumption \eqref{eq:assumption_r} 
%we have
%\begin{equation*}
%|\nabla r(x,t_0)\cdot \nabla d (x)|\geq m, \quad x\in \overline{\Omega_0}
%\end{equation*}
and $a\in C^4(\overline{\Omega})$ with $a(x)=0$, $x\in D\cup\pp \Omega$,  
we can apply Lemma \ref{lem:ce_3rdpde} and we obtain
\begin{align}
\label{eq:prf2-10}
&s \int_\Omega \sum_{|\alpha|\leq 3} |\pp_x^\alpha a(x)| e^{2s\psi(x,t_0)}\,dx
\\&\leq 
C
\int_\Omega
\left(
\left|H(x,t_0)\right|^2 + \left|\nabla H(x,t_0)\right|^2 
\right)e^{2s\psi(x,t_0)}
\,dx
\nonumber\\&\quad
+C
\int_\Omega \sum_{|\alpha|\leq 3} |\pp_x^\alpha a(x)| e^{2s\psi(x,t_0)}\,dx. 
\nonumber
\end{align}
%Choosing sufficiently large $s>1$, 
%we may absorb the second term on the right-hand side of \eqref{eq:prf2-10} 
%into the left-hand side 
%and we see that 
%\begin{align*}
%&s \int_\Omega \sum_{|\alpha|\leq 3} |\pp_x^\alpha a(x)| e^{2s\psi(x,t_0)}\,dx
%\\&\leq 
%C
%\int_\Omega
%\left(
%\left|
%\widetilde{H}(x,t_0)
%\right|^2 
%+
%\left|
%\nabla \widetilde{H}(x,t_0)
%\right|^2 
%\right)e^{2s\psi(x,t_0)}
%\,dx.
%\end{align*}
Inserting \eqref{eq:prf2-9} into \eqref{eq:prf2-10} yields
%This, together with \eqref{eq:prf2-9}, gives us
\begin{align*}
&s \int_\Omega \sum_{|\alpha|\leq 3} |\pp_x^\alpha a(x)| e^{2s\psi(x,t_0)}\,dx
\\&\leq
C\int_{\Omega} \sum_{|\alpha|\leq 3} |\pp_x^\alpha a(x)| e^{2s\psi(x,t_0)} \,dx
+
C\int_\Omega 
\sum_{|\alpha|\leq 5} \left|\pp^\alpha_x u(x,t_0)\right|^2 e^{2s\psi(x,t_0)}
\,dx. 
\end{align*}
By fixing sufficiently large $s\ge 1$, 
we absorb the first term on the right-hand side of the above inequality 
into the left-hand side, which proves Theorem \ref{thm:icp}. 
%and we have
%\begin{equation*}
%s \int_\Omega \sum_{|\alpha|\leq 3} |\pp_x^\alpha a(x)| e^{2s\psi(x,t_0)}\,dx
%\leq
%C
%\int_\Omega
%\sum_{|\alpha|\leq 5}
%\left|
%\pp^\alpha_x u(x,t_0)
%\right|^2 e^{2s\psi(x,t_0)}
%\,dx. 
%\end{equation*}
%Fixing $s>1$, we conclude Theorem \ref{thm:icp}.  
\end{proof}

%SSSSSSSSSSSSSSSSSSSSSSSSSSSSSSSSSSSSSSSSSSSSSSSSSSSSSSSSSSSSS
%
%	Proof of the Carleman estimate
%
%SSSSSSSSSSSSSSSSSSSSSSSSSSSSSSSSSSSSSSSSSSSSSSSSSSSSSSSSSSSSS
\section{Proof of the Carleman estimate}
As we know, Carleman estimate for the evolution equation $(\partial_t - K)u = F$, where $K$ is a differential operator of order higher than two, is not well discussed. 
Recently, Guerrero and Kassab \cite{Guerrero-Kassab19} established a Carleman estimate for the case $K = \Delta^2$. 
The key point is to find a suitable division of $Pw := e^{s\psi}(\partial_t - K)(e^{-s\psi}w)$ into $P_1w + P_2w + Rw$ such that one can recover positive terms from the inner product $(P_1w, P_2w)_{L^2}$.  

In this section, we consider a more complicated case $K=A^2$ where $A$ is a second-order partial differential operator with variable coefficients. For sake of simplicity, we do not discuss the boundary integrals in this paper and we assume that $u$ is compactly supported in $\Omega \times \overline{I_\delta}$, so that no spatial boundary terms appear when we carry on integration by parts. 
%We prove Theorem \ref{thm:ce0}, that is, Carleman estimate for the fourth-order differential operator $\partial_t - A^2$. 

\begin{proof}
By setting $w = e^{s\psi}u$ and $Pw := e^{s\psi}(\partial_t - A^2)(e^{-s\psi}w)$, we obtain
\begin{align*}
Pw
&=
\pp_t w
- s(\pp_t \psi) w
%
%	4th and 3rd order
%
-A^2 w 
+ 4 s\la \va a \nabla d \cdot \nabla A w 
%
%	2nd order
%
\\&\quad
-  2 s^2\la^2 \va^2 a|\nabla d|^2 A w
- 4s^2\la^2\va^2 a^2 \nabla d \cdot  [(\nabla d\cdot \nabla )\nabla w] 
\\&\quad 
+ 2 s\la^2 \va a|\nabla d|^2 A w 
+ 4s\la^2 \va a^2 \nabla d \cdot [(\nabla d \cdot \nabla)\nabla w] 
\\&\quad 
- 2 s\la \va a (\nabla a \cdot \nabla d) \Delta w 
+ 6s\la \va a \nabla a \cdot [(\nabla d \cdot \nabla )\nabla w] 
\\&\quad 
+ 4s\la \va a^2 \sum_{i,j=1}^n (\pp_i\pp_j d)(\pp_i \pp_j w) 
+ 2 s\la \va (A d) A w
%
%	1st order
%
\\&\quad
+ 4s^3 \la^3 \va^3 a^2 |\nabla d|^2 \nabla d \cdot \nabla w
- 12s^2\la^3 \va^2 a^2|\nabla d|^2 \nabla d \cdot \nabla w 
\\&\quad 
- 4s^2 \la^2 \va^2 a (A d) \nabla d \cdot \nabla w
- 4s^2 \la^2 \va^2 a (\nabla a \cdot \nabla d)\nabla d \cdot \nabla w
\\&\quad
- 4s^2 \la^2 \va^2  a \nabla (a|\nabla d|^2) \cdot \nabla w
+ 2s^2 \la^2 \va^2 a |\nabla d|^2 \nabla a \cdot \nabla w
\\&\quad
+ 4s \la^3 \va a^2 |\nabla d|^2 \nabla d \cdot \nabla w
+ 4s\la^2 \va a (A d) \nabla d \cdot \nabla w 
\\&\quad 
+ 4s\la^2 \va a (\nabla a \cdot \nabla d) \nabla d\cdot \nabla w
+ 4s\la^2 \va a \nabla (a|\nabla d|^2) \cdot \nabla w
\\&\quad 
- 2s\la^2 \va a |\nabla d|^2 \nabla a \cdot \nabla w
+ 2s\la \va a^2 \nabla \Delta d \cdot \nabla w 
\\&\quad 
+ 2s\la \va a (\Delta a) \nabla d \cdot \nabla w 
+ 6s\la \va a \nabla a \cdot [(\nabla w \cdot \nabla )\nabla d] 
\\&\quad 
+ 2s\la \va |\nabla a|^2 \nabla d\cdot \nabla w
+ 2s\la \va a \nabla A d \cdot \nabla w
%
%	0th order
%
\\&\quad
- s^4 \la^4 \va^4 a^2 |\nabla d|^4 w
+ 6s^3 \la^4 \va^3 a^2 |\nabla d|^4 w
\\&\quad
+ 2s^3 \la^3 \va^3 a (A d)|\nabla d|^2 w
+2s^3\la^3\va^3 a [\nabla d \cdot \nabla (a|\nabla d|^2)] w
\\&\quad
-7s^2\la^4\va^2 a^2 |\nabla d|^4 w 
+ s\la^4 \va a^2 |\nabla d|^4 w
\\&\quad
- 6s^2\la^3 \va^2 a [\nabla d \cdot \nabla (a|\nabla d|^2)] w 
- 6s^2 \la^3 \va^2 a (A d)|\nabla d|^2 w
\\&\quad
- s^2 \la^2 \va^2 [A (a|\nabla d|^2)] w 
- s^2 \la^2 \va^2   (A d)^2 w
-2s^2\la^2\va^2 a [\nabla d \cdot \nabla (A d)] w
\\&\quad
+ 2s\la^3 \va a |\nabla d|^2 (A d) w
+ 2s\la^3 \va a [\nabla d \cdot \nabla (a|\nabla d|^2)] w 
+ s\la^2 \va (A d)^2 w 
\\&\quad
+ s \la^2 \va [A (a|\nabla d|^2)] w
+ 2s\la^2 \va a (\nabla d \cdot \nabla A d) w 
+ s \la \va (A^2 d) w 
\\&=P_1 w+ P_2 w + R w 
\end{align*}
where
\begin{align*}
P_1 w
:=
&\
\pp_t w
+ 4 s\la \va a \nabla d \cdot \nabla A w \\
&
+ 2 s\la^2 \va a|\nabla d|^2 A w 
+ 4s\la^2 \va a^2 \nabla d \cdot [(\nabla d \cdot \nabla)\nabla w] \\
&
+2s\la \va a (\nabla a \cdot \nabla d) \Delta w 
-4s\la \va a \nabla a \cdot [(\nabla d \cdot \nabla )\nabla w] \\
&
-4 s\la \va a^2 \sum_{i,j=1}^n (\pp_i\pp_j d)(\pp_i \pp_j w) 
+ 2 s\la \va (A d) A w \\
&
+ 4s^3 \la^3 \va^3 a^2 |\nabla d|^2 \nabla d \cdot \nabla w 
+ 6s^3 \la^4 \va^3 a^2 |\nabla d|^4 w\\
&
+ 2s^3 \la^3 \va^3 a (A d)|\nabla d|^2 w
+6s^3\la^3\va^3 a [\nabla d \cdot \nabla (a|\nabla d|^2)] w
,\\
P_2 w
:=
&
- A^2 w \\
&
-  2 s^2\la^2 \va^2 a|\nabla d|^2 A w
- 4s^2\la^2\va^2 a^2 \nabla d \cdot  [(\nabla d\cdot \nabla )\nabla w] \\
&
- 12s^2\la^3 \va^2 a^2|\nabla d|^2 \nabla d \cdot \nabla w
- 4s^2 \la^2 \va^2 a (A d) \nabla d \cdot \nabla w\\
&
- 4s^2 \la^2 \va^2 a (\nabla a \cdot \nabla d)\nabla d \cdot \nabla w
- 4s^2 \la^2 \va^2  a \nabla (a|\nabla d|^2) \cdot \nabla w\\
&
+ 2s^2 \la^2 \va^2 a |\nabla d|^2 \nabla a \cdot \nabla w 
- s^4 \la^4 \va^4 a^2 |\nabla d|^4 w
, \\
R w
= &\ P w - P_1 w - P_2  w. 
\end{align*}
In order to avoid confusions, we mention that for some technical reason, some terms in $Pw$ are divided into two parts and we put one part into $P_1w$ and the other part into $R w$. 
For example, the 9th term of $Pw$: $-2s\lambda\va a(\nabla a\cdot\nabla d)\Delta w$ is divided into
$$
-2s\lambda\va a(\nabla a\cdot\nabla d)\Delta w = 2s\lambda\va a(\nabla a\cdot\nabla d)\Delta w - 4s\lambda\va a(\nabla a\cdot\nabla d)\Delta w,
$$
and then we put $2s\lambda\va a(\nabla a\cdot\nabla d)\Delta w$ into $P_1w$ and $-4s\lambda\va a(\nabla a\cdot\nabla d)\Delta w$ into $Rw$. Also the 10th, 11th and 32nd terms of $Pw$ are similarly treated. 

Taking $L^2$-norm of $P w-R w$ in $Q_\delta$, we have
\begin{align*}
\| P w -R w \|_{L^2(Q_\delta)}^2
&=
\| P_1 w+P_2 w \|_{L^2(Q_\delta)}^2 
\\&=
   \| P_1 w\|_{L^2(Q_\delta)}^2
+ \| P_2 w\|_{L^2(Q_\delta)}^2 
\\&\quad
+ 2(P_1 w , P_2 w)_{L^2(Q_\delta)}.
\end{align*}
Moreover, since we note
\begin{equation*}
\| P w -Rw \|_{L^2(Q_\delta)}^2\leq 2\| Pw  \|_{L^2(Q_\delta)}^2 +2\|Rw \|_{L^2(Q_\delta)}^2, 
\end{equation*}
we obtain
\begin{equation}
\label{eq:prfC-1}
\frac12
\| P_1 w\|_{L^2(Q_\delta)}^2
+ (P_1 w , P_2 w)_{L^2(Q_\delta)} 
\leq 
\| Pw \|_{L^2(Q_\delta)}^2 +\| R w \|_{L^2(Q_\delta)}^2.
\end{equation}

As we mentioned at the beginning of this section, the main part of the proof is to estimate the inner product 
$
(P_1 w , P_2 w)_{L^2(Q_\delta)}
$ from below. 
Let $p_k$ $(k=0,\ldots, 11)$ be the $(k+1)$-th term in $P_1 w$ 
and let 
$q_\ell$ $(\ell=1,\ldots, 9)$ be the $\ell$-th term in $P_2 w$. 
Henceforth  by $I_{k,\ell}$ we denote the $L^2$-inner product $(p_k, q_\ell)_{L^2(Q_\delta)}$.

%Our strategies to obtain the Carleman estimate are as follows:
In order to obtain the Carleman estimate, we use integration by parts and we divide the proof into the following six steps. 
\begin{itemize}
\item %1
Finding lower-order terms with respect to $s\va$ and $\la$ from $I_{k,\ell}$. 
\item %2
Calculation of $I_{k,\ell}$ that includes time derivatives. 
\item %3
Calculation of $I_{k,\ell}$ related to space derivatives. 
\item %4
Summing up the inequalities derived above.
\item %5
Recovery of terms of $(\nabla d \cdot \nabla)\nabla w$.
\item %6
Rewriting the inequality in terms of $u$. 
\end{itemize}
%Main tool in the estimation is the integration by parts. 

Hereafter we assume that $\la \ge 1$ and $s\ge \max\{1,\delta^2\}$, so that $s\va \ge 1$ in $Q_\delta$. 
Furthermore, we introduce the lower-order terms $\low$ 
which will be absorbed into the left-hand side of the Carleman estimate 
by taking sufficiently large $s$ and $\la$ in the last step. 
\begin{align*}
\low
:\!\!&=
	\int_{Q_\delta} (s\la\va+s^{\frac12}\la^2\va^{\frac12})  |\nabla d \cdot\nabla A w|^2\,dxdt\\
&\quad
+	\int_{Q_\delta} (s^3\la^3\va ^3+ s^{\frac52}\la^4\va^{\frac52})  \left|(\nabla d \cdot \nabla)\nabla w\right|^2\,dxdt\\
&\quad
+	\int_{Q_\delta} (s^5\la^5\va ^5+ s^{\frac92}\la^6\va^{\frac92}) |\nabla d \cdot \nabla w|^2\,dxdt\\
&\quad
+	\int_{Q_\delta} (s\la\va+s^{\frac12}\la^2\va^{\frac12})  |\nabla A u|^2 e^{2s\psi}\,dxdt\\
&\quad
+ \int_{Q_\delta} (s^3\la^3\va ^3+ s^{\frac52}\la^4\va^{\frac52}) |A u|^2 e^{2s\psi}\,dxdt\\
&\quad
+ \int_{Q_\delta} (s^2\la^3\va ^2+ s^{\frac32}\la^4\va^{\frac32}) \sum_{i,j=1}^n|\pp_i\pp_j u|^2 e^{2s\psi}\,dxdt\\
&\quad
+	\int_{Q_\delta} (s^4\la^5\va ^4+ s^{\frac72}\la^6\va^{\frac72})) |\nabla u|^2 e^{2s\psi}\,dxdt\\
&\quad
+	\int_{Q_\delta} (s^6\la^7\va ^6+ s^{\frac{11}2}\la^8\va^{\frac{11}2}) | u|^2 e^{2s\psi}\,dxdt
.
\end{align*}
Since $w= e^{s\psi}u$, we have $\nabla w = e^{s\psi}(\nabla u + s\lambda\varphi u \nabla d)$. Then by triangle inequality, we note that
\begin{align*}
&\int_{Q_\delta} (s^4\la^5\va^4+ s^{\frac72}\la^6\va^{\frac72}) |\nabla w|^2\,dxdt + \int_{Q_\delta} (s^6\la^7\va^6+ s^{\frac{11}2}\la^8\va^{\frac{11}2})  |w|^2\,dxdt 
\leq C\low.
\end{align*}
Similarly, we obtain
\begin{align*}
&\int_{Q_\delta} (\la+ s^{-\frac12}\la^2\va^{-\frac12}) |\nabla A w|^2 \,dxdt \le C\low,\\
&\int_{Q_\delta} (s^2\la^3\va^2+ s^{\frac32}\la^4\va^{\frac32})
\left( |A w|^2 +\sum_{i,j=1}^n|\pp_i\pp_j w|^2\right)\,dxdt \leq C\low.
\end{align*}

Then we can easily check that 
\begin{equation}
\label{eq:prfC-2}
\|Rw \|_{L^2(Q_\delta)}^2
\leq C\low.
\end{equation}

%
%	(1) Finding lower order terms with respect to $s\va$ and $\la$ from $I_{k,\ell}$. 
%
\noindent
\textbf{Step 1: Finding lower-order terms with respect to $s\va$ and $\la$.  }

We pick up several terms from $I_{k,\ell}$ ($k=0, \ldots, 11$, $\ell=1,\ldots, 9$) which can be directly estimated by $\low$. By using the inequality, $2\xi\eta\leq \xi^2+\eta^2$, we have the lower bound of the following 40 terms:
%We find the lower order terms with respect to $s\va$ and $\la$ from $I_{k,\ell}$ ($k=1, \ldots, 11$, $\ell=1,\ldots, 9$). 
%Then we obtain
\begin{equation*}
	\sum_{\ell=4}^8 \sum_{k=2}^7 I_{k,\ell} 
+ 	\sum_{\ell=4}^8 \sum_{k=10}^{11} I_{k,\ell}  
\geq -C\low. 
\end{equation*}
%These terms are estimated by $\low$ by using the inequality, $2\xi\eta\leq \xi^2+\eta^2$.  
%Thus we don't need to compute $5\times 6 +5\times 2 = 40$ terms. 

%
%	(2) Calculation of $I_{k,\ell}$ including time-derivatives. 
%
\noindent
\textbf{Step 2: Calculation of $I_{k,\ell}$ that includes time derivatives. }

%Computing from $I_{0,1}$ to $I_{0,9}$ by integrating by parts, 
%and summing up them, we obtain
We compute 9 terms from $I_{0,1}$ to $I_{0,9}$ by integration by parts. 
According to the choice of weight function, we find $\psi(\cdot,t)\to -\infty$ as $t\to t_0 \pm \delta$, 
which implies $w(\cdot,t)\to 0$ as $t\to t_0 \pm \delta$. Hence, we do not have any temporal boundary terms as we carry on integration by parts. By direct calculation, we have
\begin{align*}
%	0,1
I_{0,1}
&=
-\int_{Q_\delta}
(\pp_t w)\dd (a \nabla A w)
\,dxdt
=0.
\\
%	0,2
I_{0,2}
&=
-\int_{Q_\delta}
2s^2\la^2\va^2a |\nabla d|^2 (\pp_t w)\dd (a \nabla w)
\,dxdt
\\&=
-\frac13 I_{0,4}-\frac12 I_{0,7}
-
\int_{Q_\delta}
2s^2\la^2\va (\pp_t \va) a^2|\nabla d|^2 |\nabla w|^2
\,dxdt.
\\
%	0,3
I_{0,3}
&=
-\int_{Q_\delta}
4s^2\la^2\va^2a^2\sum_{i,j=1}^n (\pp_i d)(\pp_j d) (\pp_i \pp_j w) \pp_t w
\,dxdt
\\&=
-\frac23 I_{0,4}
-I_{0,5}
-I_{0,6}
-\frac12 I_{0,7}
-I_{0,8}
-
\int_{Q_\delta}
4s^2\la^2\va (\pp_t \va)a^2 |\nabla d \cdot \nabla w|^2
\,dxdt.
\\
%	0,4
I_{0,4}
&=
-\int_{Q_\delta}
12s^2\la^3\va^2a^2|\nabla d|^2(\nabla  d\cdot \nabla w) \pp_t w
\,dxdt.
\\
%	0,5
I_{0,5}
&=
-\int_{Q_\delta}
4s^2\la^2\va^2a (A d)(\nabla d\cdot \nabla w) \pp_t w
\,dxdt.
\\
%	0,6
I_{0,6}
&=
-\int_{Q_\delta}
4s^2\la^2\va^2a(\nabla a \cdot \nabla d)(\nabla d \cdot \nabla w) \pp_t w
\,dxdt.
\\
%	0,7
I_{0,7}
&=
-\int_{Q_\delta}
4s^2\la^2\va^2a \left[ \nabla (a |\nabla d|^2)\cdot \nabla w \right] \pp_t w
\,dxdt.
\\
%	0,8
I_{0,8}
&=
\int_{Q_\delta}
2s^2\la^2\va^2a |\nabla d|^2(\nabla a \cdot \nabla w ) \pp_t w
\,dxdt.
\\
%	0,9
I_{0,9}
&=
-\int_{Q_\delta}
s^4\la^4\va^4 a^2|\nabla d|^4 w \pp_t w
\,dxdt
=
\int_{Q_\delta}
2s^4\la^4\va^3 (\pp_t \va) a^2|\nabla d|^4 |w|^2
\,dxdt.
\end{align*}
Summing up the above equalities, we obtain
\begin{align*}
I_0
&:=\sum_{\ell=1}^9 I_{0,\ell}\\
&=
-
\int_{Q_\delta}
2s^2\la^2\va (\pp_t \va) a^2|\nabla d|^2 |\nabla w|^2
\,dxdt
-
\int_{Q_\delta}
4s^2\la^2\va (\pp_t \va)a^2 |\nabla d \cdot \nabla w|^2
\,dxdt
\\&\quad
+
\int_{Q_\delta}
2s^4\la^4\va^3 (\pp_t \va) a^2|\nabla d|^4 |w|^2
\,dxdt
\\
&\geq
-C\low. 
\end{align*}

%
%	(3) Calculation of $I_{k,\ell}$ related to space-derivatives.
%
\noindent
\textbf{Step 3: Calculation of $I_{k,\ell}$ related to space derivatives.}

%We calculate from the integrals giving us higher order terms. 
Again in terms of integration by parts, we calculate the remaining 59 terms. 

%	(3-1) Calculation of $I_{k,\ell}$ which give us 3rd order terms.
\noindent
\textbf{(\romannumeral1) Calculations of $I_{k,\ell}$ which give us 3rd derivatives.}

We first consider the 7 terms $I_{1,1}, \ldots, I_{7,1}$ and denote the sum of them by 
\begin{equation*}
I_1:=\sum_{k=1}^7 I_{k,1}.
\end{equation*} 
By integration by parts, 
we have 
\begin{align*}
%	1,1
I_{1,1}
&=
-
\int_{Q_\delta}
4s\la\va a(\nabla d \cdot \nabla A w) \dd (a \nabla A w)
\,dxdt 
\\&=
\int_{Q_\delta}
4s\la^2\va a^2|\nabla d \cdot \nabla A w|^2
\,dxdt
-
\int_{Q_\delta}
2s\la^2\va a^2  |\nabla d|^2 |\nabla A w|^2
\,dxdt
\\&\quad
+
\int_{Q_\delta}
4s\la\va a(\nabla d \cdot \nabla A w) \nabla a \cdot \nabla A w
\,dxdt 
\\&\quad
+
\int_{Q_\delta}
4s\la\va a^2 [(\nabla A w \cdot \nabla )\nabla d]\cdot  \nabla A w
\,dxdt
\\&\quad
-
\int_{Q_\delta}
2s\la\va a  (\nabla a \cdot \nabla d) |\nabla A w|^2
\,dxdt
-
\int_{Q_\delta}
2s\la\va a (A d) |\nabla A w|^2
\,dxdt.
\\
%	2,1
I_{2,1}
&=
-
\int_{Q_\delta}
2s\la^2\va a|\nabla d|^2 (A w)A^2 w
\,dxdt
\\&\geq
-C\low
+
\int_{Q_\delta}
2s\la^2\va a^2|\nabla d|^2 |\nabla A w|^2
\,dxdt.
\\
%	3,1
I_{3,1}
&=
-
\int_{Q_\delta}
4s\la^2\va a^2 \left( \nabla d \cdot [(\nabla d \cdot \nabla)\nabla w] \right) A^2 w
\,dxdt
\\&\geq
-C\low
+
\int_{Q_\delta}
4s\la^2\va a^2  |\nabla d \cdot \nabla A w|^2 
\,dxdt.
\\
%	4,1
I_{4,1}
&=
-
\int_{Q_\delta}
2s\la\va a (\nabla a \cdot \nabla d) (\Delta w) A^2 w
\,dxdt
\\&\geq
-C\low+
\int_{Q_\delta}
2s\la\va a (\nabla a \cdot \nabla d)  |\nabla A w|^2
\,dxdt.
\\
%	5,1
I_{5,1}
&=
\int_{Q_\delta}
4s\la\va a \left( \nabla a \cdot [(\nabla d \cdot \nabla)\nabla w] \right) A^2 w
\,dxdt
\\&\geq
-C\low
-
\int_{Q_\delta}
4s\la\va a  \left(\nabla a \cdot \nabla A w \right)\nabla d \cdot \nabla A w 
\,dxdt.
\\
%	6,1
I_{6,1}
&=
\int_{Q_\delta}
4s\la\va a^2 \left( \sum_{i,j=1}^n (\pp_i \pp_j d)(\pp_i \pp_j w) \right) A^2 w
\,dxdt
\\&\geq
-C\low
-
\int_{Q_\delta}
4s\la\va a^2  \nabla A w \cdot [(\nabla A w \cdot \nabla) \nabla d]
\,dxdt.
\\
%	7,1
I_{7,1}
&=
-
\int_{Q_\delta}
2s\la\va (A d)(A w)A^2 w
\,dxdt
\\&\geq
-C\low
+
\int_{Q_\delta}
2s\la\va a (A d)|\nabla A w|^2
\,dxdt
\end{align*}

Thus we obtain
\begin{equation*}
I_1
\geq
-C\low
+
\int_{Q_\delta}
8s\la^2\va a^2  |\nabla d \cdot \nabla A w|^2 
\,dxdt
.
\end{equation*}

%	(3-2) Calculation of $I_{k,\ell}$ which give us 2nd order terms.
\noindent
\textbf{(\romannumeral2) Calculations of $I_{k,\ell}$ which give us 2nd derivatives.}

Next we will calculate the following 23 terms: 
$I_{1,2}, \ldots, I_{1,8}$, %7
$I_{2,2}, \ldots, I_{7,2}$, %6
$I_{2,3}, \ldots, I_{7,3}$, %6
$I_{8,1}, \ldots, I_{11,1}$. %4
%We name the sum of $23$ terms  $I_2$, that is, 
The sum of these terms is denoted by
\begin{align*}
I_2
&:=\sum_{\ell=2}^8 I_{1, \ell}
+\sum_{k=2}^7 I_{k,2}
+\sum_{k=2}^7 I_{k,3}
+\sum_{k=8}^{11} I_{k,1}. 
\end{align*}
%There are many terms to calculate and 
%so we will group related terms and compute them one by one.
In order to make the calculations more comprehensible, we divide $I_2$ into three parts in which similar integrals appear: 
\begin{equation*}
I_2
=
I_{2,a}
+
I_{2,b}
+
I_{2,c}
\end{equation*}
where $I_{2,a},I_{2,b},I_{2,c}$ are given by  
\begin{align*}
I_{2,a}&=
I_{2,2}+I_{4,2}+I_{7,2}
+
I_{1,2}+I_{2,3}+I_{3,2}+I_{5,2}+I_{6,2}\\&\quad+I_{9,1}+I_{10,1}+I_{11,1}
+
I_{1,8}+I_{1,4}+I_{1,7}+I_{8,1},\\
I_{2,b}&=
I_{3,3}+I_{4,3}+I_{5,3}+I_{6,3}
+
I_{1,3}+I_{1,6},\\
I_{2,c}&=
I_{7,3}
+
I_{1,5}
.
\end{align*}
First, we start from the calculation of $I_{2,a}$. 
\begin{align*}
%	2,2
I_{2,2}
&=
-
\int_{Q_\delta}
4s^3\la^4\va^3 a^2 |\nabla d|^4 |A w|^2
\,dxdt.
\\
%	4,2
I_{4,2}
&=
-
\int_{Q_\delta}
4s^3\la^3\va^3 a^2 (\nabla a \cdot \nabla d) |\nabla d|^2 (\Delta w) A w
\,dxdt
\\&\geq
-C\low 
-
\int_{Q_\delta}
4s^3\la^3\va^3 a (\nabla a \cdot \nabla d) |\nabla d|^2 |A w|^2
\,dxdt.
\\
%	7,2
I_{7,2}
&=
-
\int_{Q_\delta}
4s^3\la^3\va^3 a (A  d ) |\nabla d|^2|A w|^2
\,dxdt .
\end{align*}
And then, we have
\begin{align*}
%	1,2
I_{1,2}
&=
-
\int_{Q_\delta}
8s^3\la^3\va^3 a^2 |\nabla d|^2 (\nabla d \cdot \nabla A w) A w
\,dxdt 
\\&\geq
-C\low
-3I_{2,2}
-I_{4,2}
-I_{7,2}
\\&\quad
+
\int_{Q_\delta}
8s^3\la^3\va^3 a^2 \left( \nabla d \cdot [(\nabla d \cdot \nabla )\nabla d]\right) |A w|^2
\,dxdt.
\\
%	2,3	3,2
-I_{2,3}&=-I_{3,2}
=
\int_{Q_\delta}
8 s^3\la^4\va^3 a^3 |\nabla d|^2 (A w) \nabla d \cdot [(\nabla d\cdot \nabla) \nabla w]
\,dxdt
\\&\geq
-C\low
+
\int_{Q_\delta}
8 s^3\la^4\va^3 a^4 |\nabla d|^2   |(\nabla d\cdot \nabla) \nabla w|^2  
\,dxdt.
\\
%	5,2
I_{5,2}
&=
\int_{Q_\delta}
8s^3\la^3\va^3 a^2 |\nabla d|^2 \left( \nabla a \cdot [(\nabla d \cdot \nabla)\nabla w]\right)A w
\,dxdt.
\\
%	6,2
I_{6,2}
&=
\int_{Q_\delta}
8s^3\la^3\va^3 a^3 |\nabla d|^2 \left( \sum_{i,j=1}^n (\pp_i \pp_j d) (\pp_i \pp_j w)\right) A w
\,dxdt.
\\
%	9,1
I_{9,1}
&=
-
\int_{Q_\delta}
6s^3\la^4\va^3 a^2 |\nabla d|^4 w A^2 w
\,dxdt
\geq
-C\low
+\frac32 I_{2,2}.
\\
%	10,1
I_{10,1}
&=
-
\int_{Q_\delta}
2s^3\la^3\va^3 a (A d) |\nabla d|^2 w A^2 w
\,dxdt
\geq
-C\low
+\frac12 I_{7,2}.
\\
%	11,1
I_{11,1}
&=
-
\int_{Q_\delta}
6s^3\la^3\va^3 a[ \nabla d \cdot \nabla (a|\nabla d|^2)] w A^2 w
\,dxdt
\\&\geq
-C\low
+\frac32 I_{4,2}
-
\int_{Q_\delta}
12s^3\la^3\va^3 a^2 \left(\nabla d \cdot [ (\nabla d \cdot \nabla)\nabla d]\right)  |A w|^2
\,dxdt.
\end{align*}
Furthermore, we see that
\begin{align*}
%	1,8
-I_{1,8}
&=
-
\int_{Q_\delta}
8s^3\la^3\va^3 a^2 |\nabla d|^2 (\nabla a \cdot \nabla w) \nabla d \cdot \nabla A w
\,dxdt
\geq
-C\low 
+I_{5,2}.
\\
%	1,4
I_{1,4}
&=
-
\int_{Q_\delta}
48s^3\la^4\va^3 a^3 |\nabla d|^2(\nabla d \cdot \nabla w) \nabla d \cdot \nabla A w
\,dxdt
\geq
-C\low -6I_{2,3}.
\\
%	1,7
I_{1,7}
&=
-
\int_{Q_\delta}
16s^3\la^3\va^3 a^2 \left[ \nabla (a|\nabla d|^2)\cdot \nabla w \right]\nabla d \cdot \nabla A w
\,dxdt
\\&\geq
-C\low
-2I_{1,8}
+
\int_{Q_\delta}
32s^3\la^3\va^3 a^3 \left([(\nabla d \cdot \nabla)\nabla d]\cdot [(\nabla d \cdot \nabla)\nabla w]\right) A w
\,dxdt.
\\
%	8,1
I_{8,1}
&=
-
\int_{Q_\delta}
4s^3\la^3\va^3 a^2 |\nabla d|^2 (\nabla d \cdot \nabla w)A^2 w
\,dxdt
\\&\geq
-C\low
+\frac12 I_{1,2}
-\frac14 I_{1,4}
+\frac32 I_{2,3}
-I_{4,2}
-2 I_{5,2}
- I_{6,2}
\\&\quad
-
\int_{Q_\delta}
16s^3\la^3\va^3 a^3  \left( [(\nabla d \cdot \nabla)\nabla d] \cdot  [(\nabla d \cdot \nabla ) \nabla w] \right) A w
\,dxdt.
\end{align*}
Hence we have
\begin{align*}
I_{2,a}
&\geq-C\low
+
\int_{Q_\delta}
8s^3\la^4\va^3 a^2 |\nabla d|^4 |A w|^2
\,dxdt
\\&\quad
+
\int_{Q_\delta}
8 s^3\la^4\va^3 a^4 |\nabla d|^2   |(\nabla d\cdot \nabla) \nabla w|^2  
\,dxdt
\\&\quad
+
\int_{Q_\delta}
16s^3\la^3\va^3 a^3 \left([(\nabla d \cdot \nabla)\nabla d]\cdot [(\nabla d \cdot \nabla)\nabla w]\right) A w
\,dxdt.
\end{align*} 
Second, we compute $I_{2,b}$. 
%	3,3
\begin{align*}
I_{3,3}
&=
-
\int_{Q_\delta}
16 s^3\la^4\va^3 a^4  |\nabla d \cdot [(\nabla d \cdot \nabla) \nabla w]|^2
\,dxdt.
\\
%	4,3
I_{4,3}
&=
-
\int_{Q_\delta}
8 s^3\la^3\va^3 a^3  (\nabla a \cdot \nabla d) (\Delta w)\nabla d \cdot [(\nabla d \cdot \nabla) \nabla w]
\,dxdt 
\geq -C\low.
\\
%	5,3
I_{5,3}
&=
\int_{Q_\delta}
16 s^3\la^3\va^3 a^3  (\nabla a \cdot [(\nabla d\cdot \nabla) \nabla w]) \nabla d \cdot [(\nabla d \cdot \nabla) \nabla w]
\,dxdt 
\geq -C\low.
\\
%	6,3
I_{6,3}
&=
\int_{Q_\delta}
16s^3\la^3\va^3 a^4  \left( \sum_{i,j=1}^n (\pp_i\pp_j d)(\pp_i \pp_j w) \right) \nabla d \cdot [(\nabla d \cdot \nabla) \nabla w]
\,dxdt 
\geq
-C\low.
\\
%	1,3
I_{1,3}
&=
-
\int_{Q_\delta}
16 s^3\la^3\va^3 a^3  (\nabla d \cdot \nabla A w) \nabla d \cdot [(\nabla d \cdot \nabla) \nabla w]
\,dxdt
\\&\geq
-C\low
-3 I_{3,3}
+2I_{4,3}
+I_{6,3}
+3I_{5,3}
-
\int_{Q_\delta}
24 s^3\la^4\va^3 a^4  |\nabla d|^2  |(\nabla d \cdot \nabla ) \nabla w|^2
\,dxdt
\\&\quad
-
\int_{Q_\delta}
16 s^3\la^3\va^3 a^3  (A w) [(\nabla d \cdot \nabla)\nabla d] \cdot [(\nabla d \cdot \nabla )\nabla w]
\,dxdt.
\\
%	1,6
I_{1,6}
&=
-
\int_{Q_\delta}
16s^3\la^3\va^3 a^2 (\nabla a \cdot \nabla d) (\nabla d \cdot \nabla w) \nabla d \cdot \nabla A w
\,dxdt
\geq
-C\low
-2I_{4,3}.
\end{align*}
Summing up the above inequalities, we obtain
%
%\note{Rewrite the last term.}
\begin{align*}
I_{2,b}
&\geq
-C\low
-
\int_{Q_\delta}
24 s^3\la^4\va^3 a^4  |\nabla d|^2  |(\nabla d \cdot \nabla ) \nabla w|^2
\,dxdt
\\&\quad
+
\int_{Q_\delta}
32 s^3\la^4\va^3 a^4  |\nabla d \cdot [(\nabla d \cdot \nabla) \nabla w]|^2
\,dxdt
\\&\quad
-
\int_{Q_\delta}
16 s^3\la^3\va^3 a^3 [(\nabla d \cdot \nabla)\nabla d] \cdot [(\nabla d \cdot \nabla )\nabla w] A w
\,dxdt. 
\end{align*}
Third, we calculate $I_{2,c}$. 
\begin{align*}
%	1,5
I_{1,5}
&=
-
\int_{Q_\delta}
16s^3\la^3\va^3a^2 (Ad) (\nabla d \cdot \nabla w)(\nabla d \cdot \nabla A w)
\,dxdt
\geq -C\low.
\\
%	7,3
I_{7,3}
&=
-\int_{Q_\delta}
8s^3\la^3\va^3 a^2(A d)(A w)\nabla d \cdot [(\nabla d \cdot \nabla)\nabla w]
\,dxdt
\geq -C\low -\frac12 I_{1,5}.
\end{align*}

\iffalse
%	7,3
\begin{align*}
-I_{7,3}
&=
\int_{Q_\delta}
8s^3\la^3\va^3 a^2(A d)(A w)\nabla d \cdot [(\nabla d \cdot \nabla)\nabla w]
\,dxdt
\geq -C\low.
\\
%	1,5
I_{1,5}
&=
-
\int_{Q_\delta}
16s^3\la^3\va^3a^2 (Ad) (\nabla d \cdot \nabla w)(\nabla d \cdot \nabla A w)
\,dxdt
\geq
-C\low
-2I_{7,3}.
\end{align*}
\fi

Hence we have
\begin{equation*}
I_{2,c} \geq -C\low.
\end{equation*}
Thus we get the following inequality as the result of the above calculations. 
\begin{align*}
I_2
&=
I_{2,a}
+
I_{2,b}
+
I_{2,c}
\\&\geq
-C\low
+
\int_{Q_\delta}
8s^3\la^4\va^3 a^2 |\nabla d|^4 |A w|^2
\,dxdt
\\&\quad
-
\int_{Q_\delta}
16 s^3\la^4\va^3 a^4  |\nabla d|^2  |(\nabla d \cdot \nabla ) \nabla w|^2
\,dxdt.
\end{align*}
In fact, on the right-hand side we have another non-negative term: 
$$
\int_{Q_\delta} 32s^3\lambda^4\varphi^3 a^4 |\nabla d\cdot [(\nabla d\cdot\nabla)\nabla w]|^2\,dxdt.
$$
However, we do not put it in the estimate of $I_2$ since it is not used in the following contexts. 

%	(3-3) Calculation of $I_{k,\ell}$ which give us 1st order terms.
\noindent
\textbf{(\romannumeral3) Calculations of $I_{k,\ell}$ which give us 1st derivatives.}

We compute the following $20$ terms:  
$I_{1,9}, \ldots, I_{7,9}$, %7
$I_{8,2}, \ldots, I_{8,8}$, %7
$I_{9,2}, \ldots, I_{11,2}$, %3
$I_{9,3},$ $\ldots,$ $I_{11,3}$. %3
The sum of these terms is denoted by
\begin{align*}
I_3
&:=
\sum_{k=1}^7 I_{k,9}
+\sum_{\ell=2}^8 I_{8, \ell}
+\sum_{k=9}^{11} I_{k,2}
+\sum_{k=9}^{11} I_{k,3}
=: I_{3,a}+I_{3,b}, 
\end{align*}
where $I_{3,a},I_{3,b}$ are the sums of terms given by 
\begin{align*}
I_{3,a}&=
I_{8,4}+I_{8,8}
+
I_{3,9}+I_{5,9}+I_{8,3}+I_{8,7}+I_{9,3}
\\&\quad
+
I_{2,9}+I_{4,9}+I_{7,9}+I_{10,2}+I_{8,2}+I_{9,2}
+
I_{1,9}.
\\
I_{3,b}&=
I_{6,9}
+
I_{8,5}
+
I_{8,6}
+
I_{10,3}
+
I_{11,2}
+
I_{11,3}
.
\end{align*}
%We calculate $I_{3,a}$. 
We note that
\begin{align*}
%	8,4
I_{8,4}
&=
-
\int_{Q_\delta}
48s^5\la^6\va^5a^4 |\nabla d|^4 |\nabla d \cdot \nabla w|^2
\,dxdt.
\\
%	8,8
I_{8,8}
&=
\int_{Q_\delta}
8s^5\la^5\va^5a^3 |\nabla d|^4 (\nabla a \cdot \nabla w)  \nabla d \cdot \nabla w
\,dxdt.
\end{align*}
Then we have
\begin{align*}
%	3,9
I_{3,9}
&=
-
\int_{Q_\delta}
4s^5\la^6\va^5 a^4 |\nabla d|^4 w \nabla d \cdot [(\nabla d \cdot \nabla) \nabla w]
\,dxdt
\geq
-C\low
-\frac1{12} I_{8,4}. 
\\
%	5,9
I_{5,9}
&=
\int_{Q_\delta}
4s^5\la^5\va^5 a^3 |\nabla d|^4 w \nabla a \cdot [(\nabla d \cdot \nabla ) \nabla w ]
\,dxdt
\geq
-C\low
-\frac12 I_{8,8}.
\\
%	8,3
I_{8,3}
&=
-
\int_{Q_\delta}
16s^5\la^5\va^5 a^4 |\nabla d|^2 (\nabla d \cdot \nabla w) \nabla d\cdot [(\nabla d \cdot \nabla )\nabla w]
\,dxdt
\\&\geq
-C\low
-\frac56 I_{8,4}
+
\int_{Q_\delta}
16s^5\la^5\va^5 a^4 |\nabla d|^2 (\nabla d \cdot \nabla w)  [(\nabla d \cdot \nabla )\nabla d] \cdot \nabla w
\,dxdt.
\\
%	8,7
I_{8,7}
&=
-
\int_{Q_\delta}
16s^5\la^5\va^5a^3 |\nabla d|^2 (\nabla d \cdot \nabla w) \nabla(a|\nabla d|^2)\cdot \nabla w 
\,dxdt
\\&=
-2I_{8,8}
-
\int_{Q_\delta}
32s^5\la^5\va^5a^4 |\nabla d|^2 (\nabla d \cdot \nabla w) \left[(\nabla d \cdot \nabla)\nabla d \right] \cdot \nabla w 
\,dxdt.
\\
%	9,3
I_{9,3}
&=
-
\int_{Q_\delta}
24s^5\la^6\va^5 a^4 |\nabla d|^4 w \nabla d \cdot [(\nabla d \cdot \nabla)\nabla w]
\,dxdt
\geq
-C\low
-\frac12 I_{8,4}. 
\end{align*}
Moreover, we calculate
%
%\note{Rewrite $I_{7,9}$.}
\begin{align*}
%	2,9		9,2
-I_{2,9}&=-\frac16 I_{9,2}
=
\int_{Q_\delta}
2 s^5\la^6\va^5 a^3 |\nabla d|^6 w A w 
\,dxdt
\\&\geq
-C\low
-
\int_{Q_\delta}
2 s^5\la^6\va^5 a^4 |\nabla d|^6 |\nabla w|^2
\,dxdt.
\\
%	4,9
-I_{4,9}
&=
\int_{Q_\delta}
2s^5\la^5\va^5 a^3 |\nabla d|^4 (\nabla a \cdot \nabla d) w \Delta w
\,dxdt
\\&\geq
-C\low
-
\int_{Q_\delta}
2s^5\la^5\va^5 a^3 |\nabla d|^4 (\nabla a \cdot \nabla d) |\nabla w|^2
\,dxdt.
\\
%	7,9   10,2
I_{7,9}
&= \frac12 I_{10,2} =
-
\int_{Q_\delta}
2s^5\la^5\va^5 a^2 |\nabla d|^4 (A d) w A w
\,dxdt
\\&\geq
-C\low
+
\int_{Q_\delta}
2s^5\la^5\va^5 a^3 |\nabla d|^4 (A d) |\nabla w|^2
\,dxdt. 
\\
%	8,2
I_{8,2}
&=
-
\int_{Q_\delta}
8s^5\la^5\va^5 a^3 |\nabla d|^4 (\nabla d \cdot \nabla w) A w
\,dxdt
\\&=
-\frac56 I_{8,4}
+3 I_{8,8}
-
\int_{Q_\delta}
20s^5\la^6\va^5 a^4 |\nabla d|^6 |\nabla w|^2
\,dxdt
\\&\quad
+
\int_{Q_\delta}
32 s^5\la^5\va^5 a^4 |\nabla d|^2  (\nabla d \cdot \nabla w) [(\nabla d \cdot \nabla) \nabla d] \cdot \nabla w
\,dxdt
\\&\quad
-
\int_{Q_\delta}
12 s^5\la^5\va^5 a^3 (\nabla a \cdot \nabla d) |\nabla d|^4  |\nabla w|^2
\,dxdt
\\&\quad
-
\int_{Q_\delta}
16 s^5\la^5\va^5 a^4 |\nabla d|^2 \left( [(\nabla d \cdot \nabla) \nabla d]\cdot \nabla d)\right) |\nabla w|^2
\,dxdt
\\&\quad
-
\int_{Q_\delta}
4s^5\la^5\va^5 a^3 |\nabla d|^4 (A d) |\nabla w|^2
\,dxdt
\\&\quad
+
\int_{Q_\delta}
8s^5\la^5\va^5 a^4 |\nabla d|^4 [(\nabla w \cdot \nabla) \nabla d] \cdot \nabla w
\,dxdt. 
\\
%	1,9
I_{1,9}
&=
-
\int_{Q_\delta}
4s^5\la^5\va^5 a^3 |\nabla d|^4 w \nabla d \cdot \nabla A w
\,dxdt
\\&\geq
-C\low
-10 I_{2,9}
-4 I_{4,9}
-2 I_{7,9}
-\frac12 I_{8,2}
\\&\quad
-
\int_{Q_\delta}
16 s^5\la^5\va^5 a^4 |\nabla d|^2 \left( [(\nabla d \cdot\nabla)\nabla d ] \cdot \nabla d\right) |\nabla w|^2
\,dxdt.
\end{align*}
Hence we have
\begin{align*}
I_{3,a}
&\geq
-C\low
+
\int_{Q_\delta}
40s^5\la^6\va^5a^4 |\nabla d|^4 |\nabla d \cdot \nabla w|^2
\,dxdt
\\&\quad
-
\int_{Q_\delta}
16 s^5\la^6\va^5 a^4 |\nabla d|^6 |\nabla w|^2
\,dxdt
\\&\quad
-
\int_{Q_\delta}
12 s^5\la^5\va^5 a^3  (\nabla a \cdot \nabla d)  |\nabla d|^4|\nabla w|^2
\,dxdt
\\&\quad
%-
%\int_{Q_\delta}
%4s^5\la^5\va^5 a^4 |\nabla d|^4 (\Delta d) |\nabla w|^2
%\,dxdt
%\\&\quad
-
\int_{Q_\delta}
24 s^5\la^5\va^5 a^4 |\nabla d|^2 \left( [(\nabla d \cdot \nabla) \nabla d]\cdot \nabla d)\right) |\nabla w|^2
\,dxdt
\\&\quad
+
\int_{Q_\delta}
4s^5\la^5\va^5 a^4 |\nabla d|^4 [(\nabla w \cdot \nabla) \nabla d] \cdot \nabla w
\,dxdt. 
\end{align*}
Next we compute $I_{3,b}$. 
\begin{align*}
%	6,9
I_{6,9}
&=
\int_{Q_\delta}
4 s^5\la^5\va^5 a^4 |\nabla d|^4 w \left( \sum_{i,j=1}^n (\pp_i\pp_j  d) \pp_i \pp_j w \right)
\,dxdt
\\&\geq
-C\low
-
\int_{Q_\delta}
4 s^5\la^5\va^5 a^4 |\nabla d|^4 [(\nabla w \cdot \nabla)\nabla d]\cdot \nabla w
\,dxdt.
\\
%	8,5
I_{8,5}
&=
-
\int_{Q_\delta}
16s^5\la^5\va^5a^3 (A d) |\nabla d|^2 |\nabla d \cdot \nabla w|^2
\,dxdt
\geq
-C\low.
\\
%	8,6
I_{8,6}
&=
-
\int_{Q_\delta}
16s^5\la^5\va^5a^3 (\nabla a \cdot \nabla d) |\nabla d|^2 |\nabla d \cdot \nabla w|^2
\,dxdt
\geq
-C\low.
\\
%%	10,2
%I_{10,2}
%&=
%-
%\int_{Q_\delta}
%4s^5\la^5\va^5 a^2 (A d) |\nabla d|^4 w A w
%\,dxdt
%\\&\geq
%-C\low
%+
%\int_{Q_\delta}
%4s^5\la^5\va^5 a^3 (A d) |\nabla d|^4 |\nabla w|^2
%\,dxdt. 
%\\
%	10,3
I_{10,3}
&=
-
\int_{Q_\delta}
8s^5\la^5\va^5 a^3 (A d) |\nabla d|^2  w \nabla d \cdot [(\nabla d \cdot \nabla)\nabla w]
\,dxdt
\geq
-C\low.
\\
%	11,2
I_{11,2}
&=
-
\int_{Q_\delta}
12s^5\la^5\va^5 a^2 |\nabla d|^2 [\nabla d \cdot \nabla (a|\nabla d|^2)] w A w
\,dxdt
\\&\geq
-C\low
+
\int_{Q_\delta}
12s^5\la^5\va^5 a^3 (\nabla a \cdot \nabla d)|\nabla d|^4|\nabla w|^2
\,dxdt
\\&\quad
+
\int_{Q_\delta}
24s^5\la^5\va^5 a^4 |\nabla d|^2\left( [(\nabla d \cdot \nabla)\nabla d]\cdot \nabla d\right) |\nabla w|^2
\,dxdt.
\\
%	11,3
I_{11,3}
&=
-
\int_{Q_\delta}
24s^5\la^5\va^5 a^3[\nabla d \cdot \nabla (a|\nabla d|^2)]  w \nabla d \cdot [(\nabla d \cdot \nabla)\nabla w]
\,dxdt
\geq
-C\low.
\end{align*}
Summing up the above inequalities, we obtain
\begin{align*}
I_{3,b}
&\geq
-C\low
-
\int_{Q_\delta}
4 s^5\la^5\va^5 a^4 |\nabla d|^4 [(\nabla w \cdot \nabla)\nabla d]\cdot \nabla w
\,dxdt
\\&\quad
%+
%\int_{Q_\delta}
%4s^5\la^5\va^5 a^3 (A d) |\nabla d|^4 |\nabla w|^2
%\,dxdt
%\\&\quad
+
\int_{Q_\delta}
12s^5\la^5\va^5 a^3 (\nabla a \cdot \nabla d)|\nabla d|^4|\nabla w|^2
\,dxdt
\\&\quad
+
\int_{Q_\delta}
24s^5\la^5\va^5 a^4 |\nabla d|^2\left( [(\nabla d \cdot \nabla)\nabla d]\cdot \nabla d\right) |\nabla w|^2
\,dxdt.
\end{align*}
Thus we have the following inequality. 
\begin{align*}
I_3
&=I_{3,a}+I_{3,b}
\\&\geq
-C\low
+
\int_{Q_\delta}
40s^5\la^6\va^5a^4 |\nabla d|^4 |\nabla d \cdot \nabla w|^2
\,dxdt
\\&\quad
-
\int_{Q_\delta}
16 s^5\la^6\va^5 a^4 |\nabla d|^6 |\nabla w|^2
\,dxdt
.
\end{align*}

%	(3-4) Calculation of $I_{k,\ell}$ which give us 0th order terms.
\noindent
\textbf{(\romannumeral4) Calculations of $I_{k,\ell}$ which give us 0th derivative.}

We calculate 9 terms including $I_{8,9}, \ldots, I_{11,9}$ and $I_{9,4}, \ldots, I_{9,8}$ which give us 
0th-order terms. We denote the sum of them by 
\begin{equation*}
I_4:=\sum_{k=8}^{11} I_{k,9}
+
\sum_{\ell=4}^{8} I_{9,\ell}.
\end{equation*}
On one hand, we have
\begin{align*}
%	8,9
I_{8,9}
&=
-
\int_{Q_\delta}
4s^7\la^7\va^7a^4|\nabla d|^6 w \nabla d \cdot \nabla w
\,dxdt
\\&=
\int_{Q_\delta}
14s^7\la^8\va^7a^4|\nabla d|^8  |w|^2
\,dxdt
+
\int_{Q_\delta}
2s^7\la^7\va^7a^3  (A d) |\nabla d|^6  |w|^2
%2s^7\la^7\va^7a^4  (\Delta d) |\nabla d|^6  |w|^2
\,dxdt
%\\&\quad
%+
%\int_{Q_\delta}
%12s^7\la^7\va^7 a^4 \left(  [(\nabla d \cdot \nabla)\nabla d]\cdot  \nabla d \right) |w|^2
%\,dxdt
\\&\quad
+
\int_{Q_\delta}
%8s^7\la^7\va^7  a^3 (\nabla a \cdot \nabla d)|\nabla d|^6 |w|^2
6s^7\la^7\va^7  a^3 |\nabla d|^4 \left[\nabla d\cdot \nabla(a |\nabla d|^2)\right] |w|^2
\,dxdt.
\\
%	9,9
I_{9,9}
&=
-
\int_{Q_\delta}
6s^7\la^8\va^7a^4 |\nabla d|^8 |w|^2
\,dxdt. 
\\
%	10,9
I_{10,9}
&=
-
\int_{Q_\delta}
2s^7\la^7\va^7a^3 (A d) |\nabla d|^6 |w|^2
\,dxdt.
%\\&=
%-
%\int_{Q_\delta}
%2s^7\la^7\va^7a^3 (\nabla a \cdot \nabla d) |\nabla d|^6 |w|^2
%\,dxdt
%\\&\quad
%-
%\int_{Q_\delta}
%2s^7\la^7\va^7a^4 (\Delta d) |\nabla d|^6 |w|^2
%\,dxdt.
\\
%	11,9
I_{11,9}
&=
-
\int_{Q_\delta}
6s^7\la^7\va^7a^3 |\nabla d|^4 \left[ \nabla d \cdot \nabla (a|\nabla d|^2)\right] |w|^2
\,dxdt.
%\\&=
%-
%\int_{Q_\delta}
%6s^7\la^7\va^7a^3 (\nabla a \cdot \nabla d)|\nabla d|^6 |w|^2
%\,dxdt
%\\&\quad
%-
%\int_{Q_\delta}
%12s^7\la^7\va^7a^4 |\nabla d|^4 \left( \left[ (\nabla d\cdot \nabla )\nabla d\right]  \cdot \nabla d \right)|w|^2
%\,dxdt.
\end{align*}
On the other hand, we estimate
\begin{align*}
%	9,4
I_{9,4}
&=
-
\int_{Q_\delta}
72s^5\la^7\va^5 a^4 |\nabla d|^6 w \nabla d \cdot \nabla w
\,dxdt
\geq
-C\low.
\\
%	9,5
I_{9,5}
&=
-
\int_{Q_\delta}
24s^5\la^6\va^5 a^3 (A d) |\nabla d|^4 w \nabla d\cdot \nabla w 
\,dxdt
\geq
-C\low.
\\
%	9,6
I_{9,6}
&=
-
\int_{Q_\delta}
24s^5\la^6\va^5 a^3 (\nabla a \cdot \nabla d) |\nabla d|^4 w \nabla d\cdot \nabla w 
\,dxdt
\geq
-C\low.
\\
%	9,7
I_{9,7}
&=
-
\int_{Q_\delta}
24s^5\la^6\va^5 a^3 |\nabla d|^4 w \nabla (a|\nabla d|^2)\cdot \nabla w 
\,dxdt
\geq
-C\low.
\\
%	9,8
I_{9,8}
&=
\int_{Q_\delta}
12s^5\la^6\va^5 a^3 |\nabla d|^6 w \nabla a\cdot \nabla w 
\,dxdt
\geq
-C\low.
\end{align*}

Summing up all the above terms, we obtain 
\begin{equation*}
I_4
\geq
-C\low
+
\int_{Q_\delta}
8s^7\la^8\va^7a^4|\nabla d|^8  |w|^2
\,dxdt.
\end{equation*}
%Thus we calculate all $I_{k,\ell}$ ($k=1, \ldots, 11$, $\ell=1,\ldots, 9$). 

%
%	(4) Summing up the inequalities derived in (1)--(3).
%
\noindent
\textbf{Step 4: Summing up the inequalities derived in Steps 1--3.}

Now we combine the inequalities that we have derived in the former steps and we have
%Let us sum up the inequalities derived in (1)--(3). 
%We have
\begin{align*}
&(P_1 w , P_2 w)_{L^2(Q_\delta)} 
\geq
-C\low
+I_0+I_1+I_2+I_3+I_4
\\&\geq
-C\low
+
\int_{Q_\delta}
8s\la^2\va a^2  |\nabla d \cdot \nabla A w|^2 
\,dxdt
\\&\quad
+
\int_{Q_\delta}
8s^3\la^4\va^3 a^2 |\nabla d|^4 |A w|^2
\,dxdt
-
\int_{Q_\delta}
16 s^3\la^4\va^3 a^4  |\nabla d|^2  |(\nabla d \cdot \nabla ) \nabla w|^2
\,dxdt
\\&\quad
+
\int_{Q_\delta}
40s^5\la^6\va^5a^4 |\nabla d|^4 |\nabla d \cdot \nabla w|^2
\,dxdt
-
\int_{Q_\delta}
16 s^5\la^6\va^5 a^4 |\nabla d|^6 |\nabla w|^2
\,dxdt
\\&\quad
+
\int_{Q_\delta}
8s^7\la^8\va^7a^4|\nabla d|^8  |w|^2
\,dxdt. 
\end{align*}
Together with \eqref{eq:prfC-1} and \eqref{eq:prfC-2}, this inequality yields
\begin{align}
\label{eq:prfC-3}
&\frac12
\| P_1 w\|_{L^2(Q_\delta)}^2
+
\int_{Q_\delta}
8s\la^2\va a^2  |\nabla d \cdot \nabla A w|^2 
\,dxdt
\\&
+
\int_{Q_\delta}
8s^3\la^4\va^3 a^2 |\nabla d|^4 |A w|^2
\,dxdt
\nonumber\\&
-
\int_{Q_\delta}
16 s^3\la^4\va^3 a^4  |\nabla d|^2  |(\nabla d \cdot \nabla ) \nabla w|^2
\,dxdt
\nonumber\\&
+
\int_{Q_\delta}
40s^5\la^6\va^5a^4 |\nabla d|^4 |\nabla d \cdot \nabla w|^2
\,dxdt
-
\int_{Q_\delta}
16 s^5\la^6\va^5 a^4 |\nabla d|^6 |\nabla w|^2
\,dxdt
\nonumber\\&
+
\int_{Q_\delta}
8s^7\la^8\va^7a^4|\nabla d|^8  |w|^2
\,dxdt
\nonumber\\&\leq 
\| Pw \|_{L^2(Q_\delta)}^2 
+C\low. 
\nonumber
\end{align}
In the next steps, we will estimate the non-positive terms on the left-hand side of \eqref{eq:prfC-3} from below. 

%
%	(5) Recovery of terms of $(\nabla d \cdot \nabla)\nabla w$.
%
\noindent
\textbf{Step 5: Recovery of terms of $(\nabla d \cdot \nabla)\nabla w$.}

By integration by parts, we have
\begin{align*}
&
\int_{Q_\delta}
24s^3\la^4\va^3 a^4  |\nabla d|^2 |(\nabla d \cdot \nabla) \nabla w|^2
\,dxdt 
\\&\leq
C\low
-
\int_{Q_\delta}
24s^3\la^4\va^3 a^3  |\nabla d|^2  (\nabla d \cdot \nabla w) \nabla d \cdot \nabla A w
\,dxdt 
\\&\leq
C\low
+
\int_{Q_\delta}
6
s\la^2\va a^2  |\nabla d \cdot \nabla A w|^2
\,dxdt
+
\int_{Q_\delta}
24s^5\la^6\va^5 a^4  |\nabla d|^4  |\nabla d \cdot \nabla w|^2
\,dxdt 
,
\end{align*}
where 
we used 
\begin{align*}
&s^3\la^4\va^3 a^3  |\nabla d|^2  (\nabla d \cdot \nabla w) \nabla d \cdot \nabla A w\\
&=\left(\frac1{\sqrt{2}} s^{\frac12}\la\va^{\frac12} a \nabla d \cdot \nabla A w\right) 
	\left(\sqrt{2} s^{\frac52}\la^3\va^{\frac52}  a^2 |\nabla d|^2 \nabla d \cdot \nabla w \right) \\
&\leq
\frac14
s\la^2\va a^2  |\nabla d \cdot \nabla A w|^2
+
s^5\la^6\va^5 a^4  |\nabla d|^4  |\nabla d \cdot \nabla w|^2.
\end{align*}

%This together with \eqref{eq:prfC-3} yields
Summing up this and \eqref{eq:prfC-3}, and absorbing similar integrals on the right-hand side into the left-hand side, we obtain
\begin{align}
\label{eq:prfC-4}
&\frac12
\| P_1 w\|_{L^2(Q_\delta)}^2
+
\int_{Q_\delta}
2 s\la^2\va a^2  |\nabla d \cdot \nabla A w|^2 
\,dxdt
\\&
+
\int_{Q_\delta}
8 s^3\la^4\va^3 a^2 |\nabla d|^4 |A w|^2
\,dxdt
\nonumber\\&\quad
+
\int_{Q_\delta}
8 s^3\la^4\va^3 a^4  |\nabla d|^2  |(\nabla d \cdot \nabla ) \nabla w|^2
\,dxdt
\nonumber\\&\quad
+
\int_{Q_\delta}
16 s^5\la^6\va^5a^4 |\nabla d|^4 |\nabla d \cdot \nabla w|^2
\,dxdt
\nonumber\\&\quad
-
\int_{Q_\delta}
16 s^5\la^6\va^5 a^4 |\nabla d|^6 |\nabla w|^2
\,dxdt
+
\int_{Q_\delta}
8 s^7\la^8\va^7a^4|\nabla d|^8  |w|^2
\,dxdt
\nonumber\\&\leq 
\| Pw \|_{L^2(Q_\delta)}^2 
+C\low.
\nonumber
\end{align}

%
%	(6) Rewriting the inequality in terms of $u$. 
%
\noindent
\textbf{Step 6: Rewriting the inequality in terms of $u$. }

We first calculate the term of $A u$. 
Since 
\begin{align*}
e^{s\psi} A u
&= 
 A w 
- 2s \la \va a \nabla d \cdot \nabla w
%\\
%&\quad
- s   \la   \va   (A d) w
- s   \la^2 \va   a |\nabla d|^2 w
+ s^2 \la^2 \va^2 a |\nabla d|^2 w 
, 
\end{align*}
we find that 
\begin{align}
\label{eq:prfC-5}
&\int_{Q_\delta}
s^3\la^4\va^3 a^2 |\nabla d|^4 |A u|^2 e^{2s\psi}
\,dxdt
\\&\leq
\int_{Q_\delta}
s^3\la^4\va^3 a^2 |\nabla d|^4 
\biggl(
\left(
A w 
+ s^2 \la^2 \va^2 a |\nabla d|^2 w
\right) 
- 2s \la \va a \nabla d \cdot \nabla w
\nonumber\\&\qquad
- \left(s \la   \va   (A d) w
+ s \la^2 \va   a |\nabla d|^2 w \right)
\biggr)^2
\,dxdt
\nonumber\\&\leq
\int_{Q_\delta}
8s^3\la^4\va^3 a^2 |\nabla d|^4 
\left(
A w 
+ s^2 \la^2 \va^2 a |\nabla d|^2 w
\right)^2
\,dxdt
\nonumber\\&\quad
+
\int_{Q_\delta}
8s^5\la^6\va^5a^4 |\nabla d|^4 |\nabla d \cdot \nabla w|^2
\,dxdt
+C\low
,
\nonumber
\end{align}
where we used 
\begin{align*}
(p+q+r)^2
&\leq
2(p+r)^2+2q^2
\leq
4p^2 +2q^2 +4r^2
\leq
8p^2 +2q^2 +4r^2.
\end{align*}

\iffalse
as 
\begin{align*}
p&=A w 
+ s^2 \la^2 \va^2 a |\nabla d|^2 w,
\\
q&=- 2s \la \va a \nabla d \cdot \nabla w,
\\
r&=- s   \la   \va   (A d) w
- s   \la^2 \va   a |\nabla d|^2 w.
\end{align*} 
\fi

By integration by parts, 
we compute the first term on the right-hand side of the above inequality \eqref{eq:prfC-5}. 
\begin{align*}
&\int_{Q_\delta}
s^3\la^4\va^3 a^2 |\nabla d|^4 
\left(
A w 
+ s^2 \la^2 \va^2 a |\nabla d|^2 w
\right)^2
\,dxdt
\\&=
\int_{Q_\delta}
s^3\la^4\va^3 a^2 |\nabla d|^4 |A w|^2
\,dxdt
+
\int_{Q_\delta}
s^7\la^8\va^7a^4|\nabla d|^8  |w|^2
\,dxdt
\\&\quad
+
\int_{Q_\delta}
2s^5\la^6\va^5 a^3 |\nabla d|^6 
wA w 
\,dxdt
\\&\leq
C\low
+
\int_{Q_\delta}
s^3\la^4\va^3 a^2 |\nabla d|^4 |A w|^2
\,dxdt
+
\int_{Q_\delta}
s^7\la^8\va^7a^4|\nabla d|^8  |w|^2
\,dxdt
\\&\quad
-
\int_{Q_\delta}
2s^5\la^6\va^5 a^4 |\nabla d|^6 
|\nabla w|^2 
\,dxdt
.
\end{align*}
%This with \eqref{eq:prfC-5} gives us
Thus from \eqref{eq:prfC-5}, we have
\begin{align*}
&\int_{Q_\delta}
s^3\la^4\va^3 a^2 |\nabla d|^4 |A u|^2 e^{2s\psi}
\,dxdt
\\&\leq
C\low
+
\int_{Q_\delta}
8s^3\la^4\va^3 a^2 |\nabla d|^4 |A w|^2
\,dxdt
\\&\quad
+
\int_{Q_\delta}
8s^5\la^6\va^5a^4 |\nabla d|^4 |\nabla d \cdot \nabla w|^2
\,dxdt
-
\int_{Q_\delta}
16s^5\la^6\va^5 a^4 |\nabla d|^6 
|\nabla w|^2 
\,dxdt
\\&\quad
+
\int_{Q_\delta}
8s^7\la^8\va^7a^4|\nabla d|^8  |w|^2
\,dxdt
.
\end{align*}
Combining this with \eqref{eq:prfC-4}, we obtain
\begin{align}
\label{eq:prfC-6}
&
\int_{Q_\delta}
s^3\la^4\va^3 a^2 |\nabla d|^4 |A u|^2 e^{2s\psi}
\,dxdt
\\&
+
\frac12
\| P_1 w\|_{L^2(Q_\delta)}^2
+
\int_{Q_\delta}
2 s\la^2\va a^2  |\nabla d \cdot \nabla A w|^2 
\,dxdt
\nonumber\\&
+
\int_{Q_\delta}
8 s^3\la^4\va^3 a^4  |\nabla d|^2  |(\nabla d \cdot \nabla ) \nabla w|^2
\,dxdt
\nonumber\\&\quad
+
\int_{Q_\delta}
8 s^5\la^6\va^5a^4 |\nabla d|^4 |\nabla d \cdot \nabla w|^2
\,dxdt
\nonumber\\&\leq 
\| Pw \|_{L^2(Q_\delta)}^2 
+C\low.
\nonumber
\end{align}
We mention that all the terms on the left-hand side of the above inequality are now non-negative. 

Next we estimate the terms of $\pp_t u$ and $A^2 u$. 
Since $\partial_t u = e^{-s\psi}(\partial_t w - s(\partial_t \psi)w)$, we have
\begin{align*}
&\int_{Q_\delta}
\frac1{s\va} |\pp_t u|^2 e^{2s\psi}
\,dxdt
\\&\leq
C
\int_{Q_\delta}
\frac1{s\va} |\pp_t w|^2
\,dxdt
+
C
\int_{Q_\delta}
s\va^{-1} |\pp_t \psi|^2 |w|^2
\,dxdt
\\&\leq
C
\int_{Q_\delta}
\frac1{s\va}
\left|
P_1 w 
-
4s\la\va a \nabla d \cdot \nabla A w 
-
4s^3\la^3\va^3 a^2 |\nabla d|^2 \nabla d\cdot \nabla w
\right|^2
\,dxdt
%\\&\quad
%+
%C
%\int_{Q_\delta}
%\pp_t \psi |u|^2 e^{2s\psi}
%\,dxdt
+
C\low
\\&\leq
C\| P_1 w\|_{L^2({Q_\delta})}^2
+
C
\int_{Q_\delta}
s\la^2\va a^2 |\nabla d \cdot \nabla A w |^2
\,dxdt
\\&\quad
+
C
\int_{Q_\delta}
s^5\la^6\va^5a^4 |\nabla d|^4 |\nabla d \cdot \nabla w|^2
\,dxdt
%+
%C
%\int_{Q_\delta}
%\pp_t \psi |u|^2 e^{2s\psi}
%\,dxdt
+C\low.
\end{align*}
Here we used the definition of $P_1w$ and the estimate %the relation $|\partial_t \psi| \le C\varphi^2$, which implies 
$$
\int_{Q_\delta}
s\va^{-1} |\pp_t \psi|^2 |w|^2
\,dxdt
\le 
C\int_{Q_\delta}
s\va^{3} |w|^2
\,dxdt
\le
C\low.
$$
%This together with \eqref{eq:prfC-6} yields
Thus by using \eqref{eq:prfC-6}, we derive
\begin{align}
\label{eq:prfC-7}
&
\int_{Q_\delta}
\frac1{s\va} |\pp_t u|^2 e^{2s\psi}
\,dxdt
%+
%\int_{Q_\delta}
%s^3\la^4\va^3 a^2 |\nabla d|^4 |A u|^2 e^{2s\psi}
%\,dxdt
%\\
%&
%+
%\int_{Q_\delta}
%s\la^2\va a^2  |\nabla d \cdot \nabla A w|^2 
%\,dxdt
%+
%\int_{Q_\delta}
%s^3\la^4\va^3 a^4  |\nabla d|^2  |(\nabla d \cdot \nabla ) \nabla w|^2
%\,dxdt
%\nonumber\\&\quad
%+
%\int_{Q_\delta}
%s^5\la^6\va^5a^4 |\nabla d|^4 |\nabla d \cdot \nabla w|^2
%\,dxdt
%\nonumber\\
\leq 
C
\| Pw \|_{L^2({Q_\delta})}^2 
%+
%C
%\int_{Q_\delta}
%\pp_t \psi |u|^2 e^{2s\psi}
%\,dxdt
+C\low.
%\nonumber
\end{align}
Noting that $A^2 u = \pp_t u - e^{-s\psi}Pw$, we have
\begin{equation*}
\int_{Q_\delta}
\frac1{s\va} |A^2 u|^2 e^{2s\psi}
\,dxdt
\leq
C
\| Pw \|_{L^2({Q_\delta})}^2 
+
C
\int_{Q_\delta}
\frac1{s\va} |\pp_t u|^2 e^{2s\psi}
\,dxdt.
\end{equation*}
We combine this with \eqref{eq:prfC-6} and \eqref{eq:prfC-7} and we obtain
\begin{align}
\label{eq:prfC-8}
&
\int_{Q_\delta}
\frac1{s\va} \left( |\pp_t u|^2 + |A^2 u|^2 \right)e^{2s\psi} 
\,dxdt
+
\int_{Q_\delta}
s^3\la^4\va^3 a^2 |\nabla d|^4 |A u|^2 e^{2s\psi}
\,dxdt
\\
&
+
\int_{Q_\delta}
s\la^2\va a^2  |\nabla d \cdot \nabla A w|^2 
\,dxdt
+
\int_{Q_\delta}
s^3\la^4\va^3 a^4  |\nabla d|^2  |(\nabla d \cdot \nabla ) \nabla w|^2
\,dxdt
\nonumber\\&\quad
+
\int_{Q_\delta}
s^5\la^6\va^5a^4 |\nabla d|^4 |\nabla d \cdot \nabla w|^2
\,dxdt
\nonumber\\
&\leq 
C
\| Pw \|_{L^2({Q_\delta})}^2 
%+
%C
%\int_{Q_\delta}
%\pp_t \psi |u|^2 e^{2s\psi}
%\,dxdt
+C\low.
\nonumber
\end{align}

By using the positivity of $a$, 
$|\nabla d(x)|>\sigma$, $x\in\overline{\Omega}$, 
%and $u(x,t)=0$, $x\in (D\cup\pp\Omega)\times \Lambda_0$ 
the left-hand side of \eqref{eq:prfC-8} can be estimated from below and we have
\begin{align}
\label{eq:prfC-9}
&
\int_{Q_\delta}
\frac1{s\va} \left( |\pp_t u|^2 + |A^2 u|^2 \right)e^{2s\psi} 
\,dxdt
+
\int_{Q_\delta}
s^3\la^4\va^3 |A u|^2 e^{2s\psi}
\,dxdt
\\
&
+
\int_{Q_\delta}
s\la^2\va |\nabla d \cdot \nabla A w|^2 
\,dxdt
+
\int_{Q_\delta}
s^3\la^4\va^3 |(\nabla d \cdot \nabla ) \nabla w|^2
\,dxdt
\nonumber\\&\quad
+
\int_{Q_\delta}
s^5\la^6\va^5 |\nabla d \cdot \nabla w|^2
\,dxdt
\nonumber\\
&\leq 
C
\| Pw \|_{L^2({Q_\delta})}^2 
%+
%C
%\int_{Q_\delta}
%\pp_t \psi |u|^2 e^{2s\psi}
%\,dxdt
+C\low.
\nonumber
\end{align}

To obtain the lower-order derivatives of $u$, 
we use the Carleman estimate for the elliptic operator $A$. 
By Lemma \ref{lem:ce_elliptic2}, the inequality \eqref{eq:prfC-9} yields
\begin{align}
\label{eq:prfC-10}
&
\int_{Q_\delta}
		\Biggl[
		\frac1{s\va} \left( |\pp_t u|^2 +|A^2 u|^2\right) 
		+ s^3\la^4\va^3  |A u|^2
		\\&\qquad
		+ s^2\la^4\va^2  \sum_{i,j=1}^n |\pp_i \pp_j u |^2 
		+ s^4\la^6\va^4 |\nabla  u|^2 
		+ s^6\la^8\va^6 |u|^2
		\Biggr] e^{2s\psi}
\,dxdt
\nonumber\\
&
+
\int_{Q_\delta}
s\la^2\va |\nabla d \cdot \nabla A w|^2 
\,dxdt
+
\int_{Q_\delta}
s^3\la^4\va^3 |(\nabla d \cdot \nabla ) \nabla w|^2
\,dxdt
\nonumber\\&\quad
+
\int_{Q_\delta}
s^5\la^6\va^5 |\nabla d \cdot \nabla w|^2
\,dxdt
\nonumber\\
&\leq 
C
\| Pw \|_{L^2({Q_\delta})}^2 
%+
%C
%\int_{Q_\delta}
%\pp_t \psi |u|^2 e^{2s\psi}
%\,dxdt
+C\low.
\nonumber
\end{align}
Hereafter we assume that $s\ge \max\{1,\delta^2\}$ and $\la\ge 1$ are large enough such that the above inequality \eqref{eq:prfC-10} holds true. 

Finally we recover the third derivative $\nabla A u$. We carry on integration by parts and obtain
%
%\note{Erased 
%{\tiny$\int_{Q_\delta}
%2s^3\la^4\va^3 |\nabla d|^2 |A u|^2 e^{2s\psi}
%\,dxdt$}.\\
%Do we need this term?}
\begin{align*}
&\int_{Q_\delta}
s\la^2\va |\nabla A u|^2 e^{2s\psi}
\,dxdt
\\&\leq
C\low
-
\int_{Q_\delta}
s\la^2\va  a^{-1} (A^2 u) (A u) e^{2s\psi}
\,dxdt
+
\int_{Q_\delta}
2s^3\la^4\va^3 |\nabla d|^2 |A u|^2 e^{2s\psi}
\,dxdt
\\&\leq
C\low
+
C
\int_{Q_\delta}
\frac1{s\va} |A^2 u|^2 e^{2s\psi}
\,dxdt
+
C
\int_{Q_\delta}
s^3\la^4\va^3 |A u|^2 e^{2s\psi}
\,dxdt.
\end{align*}
Putting this together with \eqref{eq:prfC-10} leads to
\begin{align}
\nonumber
&
\int_{Q_\delta}
		\Biggl(
		\frac1{s\va} \left( |\pp_t u|^2 +|A^2 u|^2\right) 
		+s\la^2\va |\nabla A u|^2 e^{2s\psi}
		+ s^3\la^4\va^3  |A u|^2
		\\&\qquad
		+ s^2\la^4\va^2  \sum_{i,j=1}^n |\pp_i \pp_j u |^2 
		+ s^4\la^6\va^4 |\nabla  u|^2 
		+ s^6\la^8\va^6 |u|^2
		\Biggr) e^{2s\psi}
\,dxdt
\nonumber\\
&
+
\int_{Q_\delta}
s\la^2\va |\nabla d \cdot \nabla A w|^2 
\,dxdt
+
\int_{Q_\delta}
s^3\la^4\va^3 |(\nabla d \cdot \nabla ) \nabla w|^2
\,dxdt
\nonumber\\&\quad
+
\int_{Q_\delta}
s^5\la^6\va^5 |\nabla d \cdot \nabla w|^2
\,dxdt
\nonumber\\
&\leq 
C
\| Pw \|_{L^2({Q_\delta})}^2 
%+
%C
%\int_{Q_\delta}
%\pp_t \psi |u|^2 e^{2s\psi}
%\,dxdt
+C\low
\nonumber
.
\end{align}
In the end, %choosing sufficiently large $\la$ and 
taking sufficiently large $\la$ and $s$, 
we may absorb the lower-order term $C\low$ on the right-hand side of the above inequality 
into the left-hand side, % and we may get
\iffalse
\begin{align}
\nonumber
&
\int_{Q_\delta}
		\Biggl[
		\frac1{s\va} \left( |\pp_t u|^2 +|A^2 u|^2\right) 
		+s\la^2\va |\nabla A u|^2 e^{2s\psi}
		+ s^3\la^4\va^3  |A u|^2
		\\&\qquad
		+ s^2\la^4\va^2  \sum_{i,j=1}^n |\pp_i \pp_j u |^2 
		+ s^4\la^6\va^4 |\nabla  u|^2 
		+ s^6\la^8\va^6 |u|^2
		\Biggr] e^{2s\psi}
\,dxdt
\nonumber\\
&\leq 
C
\| Ge^{s\psi} \|_{L^2({Q_\delta})}^2 
\nonumber
.
\end{align}
\fi
which concludes the proof of Theorem \ref{thm:ce0}. 
\end{proof}
%SSSSSSSSSSSSSSSSSSSSSSSSSSSSSSSSSSSSSSSSSSSSSSSSSSSSSSSSSSSSS
%
%	Appendix
%
%SSSSSSSSSSSSSSSSSSSSSSSSSSSSSSSSSSSSSSSSSSSSSSSSSSSSSSSSSSSSS
\section*{Appendix}
%-------------------------------------------------------------
%
%	Carleman Estimate for Elliptic Equations
%
%-------------------------------------------------------------
\subsection*{Carleman Estimates for Elliptic Equations}
We state the Carleman estimate for the following elliptic operator: 
\begin{equation*}
\widetilde{A}v(x) = 
\dd (\widetilde{a}\nabla v(x) ) 
-\widetilde{\mathbf{b}}(x)\cdot \nabla v(x)
-\widetilde{c}(x) v(x), \quad x \in \Omega, 
\end{equation*}
where
$\widetilde{a}\in C^1(\overline{\Omega})$, 
$\widetilde{\mathbf{b}}\in \{L^\infty(\overline{\Omega})\}^n$,
$\widetilde{c}\in L^\infty(\overline{\Omega})$,  and 
$\frac1{\widetilde{\mu}}<\widetilde{a}(x)<\widetilde{\mu}$, $x\in \overline\Omega$ with  a positive constant $\widetilde{\mu}$. 
We define %the weight functions of the Carleman estimate by    
\begin{equation*}
\widetilde{\va}(x) := \theta_0e^{\la d(x)}, \quad 
\widetilde{\psi}(x) := \theta_0 \left(e^{\la d(x)}-e^{2\la \|d\|_{C(\overline{\Omega})}}\right), \quad x \in \Omega.
\end{equation*}
for $\theta_0>0$, where $d$ is the same function introduced in Section {\bf 2}.

\begin{lem}
\label{lem:ce_elliptic1}
There exists $\la_0>0$ such that for any $\la\ge \la_0$, 
we may choose $s_0(\la)>0$ satisfying the following: 
there exists a constant $C=C(s_0,\la_0)>0$ such that 
\begin{equation*}
\int_{\Omega} 
		\Biggl(
		\frac1{s\widetilde{\va}} \left( \sum_{i,j}^n |\pp_i\pp_j v|^2\right) 
		+
		s\la^2\widetilde{\va} |\nabla v|^2
		+ 
		s^3\la^4\widetilde{\va}^3  |v|^2
		\Biggr) e^{2s\widetilde{\psi}}
\,dx\leq
C\int_{\Omega} \left|\widetilde{A}v\right|^2 e^{2s\widetilde{\psi}} \,dx
\end{equation*}
for all $s\ge s_0$ and
all $v \in H^2(\Omega)$ that is compactly supported in $\Omega$. %with  $v(x)=0$, $x\in D \cup \pp\Omega$. 
\end{lem}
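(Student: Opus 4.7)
The plan is to prove this estimate by the standard conjugation method for elliptic Carleman estimates, following the Fursikov--Imanuvilov approach. Since the lower-order terms $\widetilde{\mathbf{b}}\cdot\nabla v$ and $\widetilde{c}v$ can be moved to the right-hand side and then absorbed into the left-hand side by choosing $s,\lambda$ sufficiently large (the corresponding weighted $L^2$-norms of $\nabla v$ and $v$ have strictly lower powers of $s\widetilde{\va}$ than the terms produced by the estimate), it suffices to establish the estimate for the principal part $Lv := \dd(\widetilde{a}\nabla v)$.

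First I would perform the conjugation: set $w = e^{s\widetilde{\psi}}v$ and compute
\begin{equation*}
Pw := e^{s\widetilde{\psi}} L\bigl(e^{-s\widetilde{\psi}}w\bigr)
 = \dd(\widetilde{a}\nabla w) - 2s\widetilde{a}\nabla\widetilde{\psi}\cdot\nabla w + s^2\widetilde{a}|\nabla\widetilde{\psi}|^2 w - s\,\dd(\widetilde{a}\nabla\widetilde{\psi})\,w,
\end{equation*}
and then split $Pw = P_1 w + P_2 w + Rw$, with
\begin{equation*}
P_1 w := \dd(\widetilde{a}\nabla w) + s^2\widetilde{a}|\nabla\widetilde{\psi}|^2 w,\qquad P_2 w := -2s\widetilde{a}\nabla\widetilde{\psi}\cdot\nabla w - 2s\,\alpha\,w,
\end{equation*}
where $\alpha$ is chosen (of order $s\lambda\widetilde{\va}$, essentially $\alpha = \dd(\widetilde{a}\nabla\widetilde{\psi})/2$ plus tunable correction) so that $P_1$ is formally symmetric and $P_2$ is formally antisymmetric on $H^2_{\mathrm{comp}}(\Omega)$, and $Rw$ collects remainders controlled by $\low$-type terms.

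Next, from $\|Pw\|_{L^2}^2 \ge \tfrac12\|P_1 w\|_{L^2}^2 + 2(P_1 w,P_2 w)_{L^2} - C\|Rw\|_{L^2}^2$ (after bounding $\|Rw\|$ by lower-order integrals absorbable by large $s,\lambda$), the heart of the argument is to show that $2(P_1 w, P_2 w)_{L^2(\Omega)}$ produces, after integration by parts in $x$, the positive bulk terms
\begin{equation*}
\int_\Omega \left(\, s\lambda^2\widetilde{\va}\,|\nabla w|^2 + s^3\lambda^4\widetilde{\va}^3\,|w|^2 \,\right) dx
\end{equation*}
up to terms that can be absorbed. Since $\nabla\widetilde{\psi} = \lambda\widetilde{\va}\nabla d$, each $\lambda$-factor is generated by differentiating $\widetilde{\va}$, and positivity of $|\nabla d|\ge \sigma$ is what converts the resulting quadratic forms into the claimed lower bound. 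The computation is parallel to (but much shorter than) the one in Section~5 for the fourth-order operator; the boundary integrals vanish because $v$ is compactly supported in $\Omega$.

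Finally, to recover the second-derivative term $\frac{1}{s\widetilde{\va}}\sum_{i,j}|\pp_i\pp_j v|^2$ I would read it off from the equation itself: using $\dd(\widetilde{a}\nabla v) = \widetilde{A}v + \widetilde{\mathbf{b}}\cdot\nabla v + \widetilde{c}v$ together with ellipticity and the already-obtained estimates on $\|s^{1/2}\lambda\widetilde{\va}^{1/2}\nabla v\,e^{s\widetilde\psi}\|_{L^2}$ and $\|s^{3/2}\lambda^2\widetilde{\va}^{3/2}v\,e^{s\widetilde\psi}\|_{L^2}$, one controls $\|\frac{1}{\sqrt{s\widetilde{\va}}}\Delta v\,e^{s\widetilde\psi}\|_{L^2}$, and standard elliptic regularity (or integration by parts on $|\Delta v|^2$) then upgrades this to the full Hessian. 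Transforming from $w$ back to $v$ via $v = e^{-s\widetilde{\psi}}w$ introduces only already-controlled terms, and fixing $\lambda$ large enough then $s$ large enough concludes.

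The main obstacle, as always with Carleman estimates for variable-coefficient operators, is the bookkeeping in the cross-term $(P_1 w,P_2 w)_{L^2}$: one has to check that the coefficients in front of $|\nabla w|^2$ and $|w|^2$ produced by integration by parts are strictly positive, uniformly in $x$, after absorbing errors coming from derivatives of $\widetilde{a}$ and of $\nabla d$. The tunable parameter $\alpha$ in $P_2 w$ and the freedom to take $\lambda$ large (to dominate $\widetilde{a}$-derivatives by powers of $\lambda$) are what make this work.
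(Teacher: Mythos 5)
Your proposal is correct and follows essentially the same route as the paper, which does not write out this proof at all but simply defers to the standard integration-by-parts Carleman argument for second-order elliptic operators in \cite{Yamamoto09}; your conjugation $w=e^{s\widetilde{\psi}}v$, the symmetric/antisymmetric splitting, the cross-term computation exploiting $\nabla\widetilde{\psi}=\la\widetilde{\va}\nabla d$ and $|\nabla d|\ge\sigma$, and the recovery of the Hessian from the equation plus the lower-order estimates are exactly that standard argument. The only point glossed over is that the full term $s\la^2\widetilde{\va}|\nabla w|^2$ is not read off directly from $(P_1w,P_2w)$ (which most naturally yields $s\la^2\widetilde{\va}|\nabla d\cdot\nabla w|^2$ and the zeroth-order term) but requires one further multiplication of $P_1w$ by $s\la^2\widetilde{\va}|\nabla d|^2 w$ and integration by parts; this is a routine step within the same method and does not affect correctness.
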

We may prove the above lemma by the integration by parts 
via an argument similar to that used in the proof of Carleman estimate for elliptic equations. Hence we refer to, for example, \cite{Yamamoto09} and omit the details of the proof here. 

Using Lemma \ref{lem:ce_elliptic1} with suitable $\theta_0= \theta_0(t)$ and integrating over $I_\delta$, we can show the following lemma. 
\begin{lem}
\label{lem:ce_elliptic2}
There exists $\la_0>0$ such that for any $\la\ge \la_0$, 
we may choose $s_0(\la)>0$ satisfying the following: 
there exists a constant $C=C(s_0,\la_0)>0$ such that 
\begin{align*}
&
\int_{Q_\delta} 
		\Biggl(
		s^2\la^4\va^2  \sum_{i,j=1}^n |\pp_i \pp_j u |^2 
		+ s^4\la^6\va^4 |\nabla  u|^2 
		+ s^6\la^8\va^6 |u|^2
		\Biggr)  e^{2s\psi}
\,dxdt 
\\&\leq
C\int_{Q_\delta} s^3\la^4\va^3 \left|Au\right|^2  e^{2s\psi} \,dxdt
\end{align*}
for all $s\ge s_0$ and
all $u \in L^2\left(0,T;H^2(\Omega)\right)$ with $\supp\, u \subset \Omega \times \overline{I_\delta}$. %$u(x,t)=0$, $x\in  (D\cup\pp\Omega) \times \Lambda_0$. 
\end{lem}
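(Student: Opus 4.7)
The plan is to derive Lemma \ref{lem:ce_elliptic2} by applying Lemma \ref{lem:ce_elliptic1} pointwise in $t$ with a well-chosen $\theta_0(t)$ and then integrating over $I_\delta$. Fix $\la \geq \la_0$ and $s \geq s_0(\la)$ as in Lemma \ref{lem:ce_elliptic1}, and take
$$
\theta_0 = \theta_0(t) := \frac{1}{(t-t_0+\delta)(t_0+\delta-t)}, \qquad t \in I_\delta,
$$
so that $\widetilde{\va}(x) = \theta_0(t) e^{\la d(x)} = \va(x,t)$ and $\widetilde{\psi}(x) = \psi(x,t)$. Since $u$ is compactly supported in $\Omega \times \overline{I_\delta}$, the slice $u(\cdot,t)$ lies in $H^2(\Omega)$ and is compactly supported in $\Omega$ for a.e.\ $t \in I_\delta$, so Lemma \ref{lem:ce_elliptic1} applies to $v(x) := u(x,t)$ with $\widetilde{A} = A$ and gives
\begin{align*}
\int_\Omega \left( \frac{1}{s\va} \sum_{i,j=1}^n |\pp_i \pp_j u|^2 + s\la^2 \va |\nabla u|^2 + s^3 \la^4 \va^3 |u|^2 \right) e^{2s\psi}\, dx \leq C_0 \int_\Omega |Au|^2 e^{2s\psi}\, dx.
\end{align*}

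The crux is to boost the powers by multiplying this pointwise-in-$t$ inequality by the $t$-only factor $s^3 \la^4 \theta_0(t)^3$, which can be pulled inside the $x$-integral. From $\va(x,t) = \theta_0(t) e^{\la d(x)}$ one has $\theta_0(t)^3 = \va(x,t)^3 e^{-3\la d(x)}$, hence
$$
\frac{s^3 \la^4 \theta_0^3}{s\va} = s^2 \la^4 \va^2 e^{-3\la d}, \quad s^3\la^4\theta_0^3 \cdot s\la^2\va = s^4\la^6 \va^4 e^{-3\la d}, \quad s^3\la^4\theta_0^3 \cdot s^3\la^4 \va^3 = s^6 \la^8 \va^6 e^{-3\la d},
$$
so every LHS term picks up exactly the target weight of Lemma \ref{lem:ce_elliptic2} times the common factor $e^{-3\la d}$, while the RHS becomes $C_0 \int s^3 \la^4 \va^3 e^{-3\la d} |Au|^2 e^{2s\psi}\,dx$. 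Since $e^{-3\la d}$ lies in $\bigl[e^{-3\la\|d\|_{C(\overline{\Omega})}},\, 1\bigr]$ on $\overline{\Omega}$, bounding it by $1$ on the right-hand side and by the positive constant $e^{-3\la\|d\|_{C(\overline{\Omega})}}$ on the left-hand side produces
\begin{align*}
\int_\Omega \left( s^2\la^4\va^2 \sum_{i,j=1}^n |\pp_i\pp_j u|^2 + s^4\la^6\va^4 |\nabla u|^2 + s^6\la^8\va^6 |u|^2 \right) e^{2s\psi}\,dx \leq C_1 \int_\Omega s^3\la^4\va^3 |Au|^2 e^{2s\psi}\,dx,
\end{align*}
with $C_1 := C_0\, e^{3\la \|d\|_{C(\overline{\Omega})}}$. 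Integrating both sides in $t$ over $I_\delta$ then yields the desired estimate.

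The only real obstacle is the arithmetic bookkeeping of the second step: one must verify that the specific multiplier $s^3\la^4\theta_0(t)^3$ is precisely what is needed so that, after converting $\theta_0$ to $\va$, all three LHS weights align with those of Lemma \ref{lem:ce_elliptic2} and a \emph{single} exponential factor $e^{-3\la d}$ is left over, controllable by the $L^\infty$-bound on $d$. The resulting constant $C_1$ depends on $\la$ through $e^{3\la\|d\|_{C(\overline{\Omega})}}$; since $\la$ is fixed once chosen $\geq \la_0$, this factor is absorbed into the final $C = C(s_0,\la_0)$, in line with the constant convention already adopted in Lemma \ref{lem:ce_elliptic1}.
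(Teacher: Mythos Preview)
Your proof is correct and follows precisely the approach the paper sketches in one line (``Using Lemma \ref{lem:ce_elliptic1} with suitable $\theta_0= \theta_0(t)$ and integrating over $I_\delta$''): you make the natural choice $\theta_0(t)=[(t-t_0+\delta)(t_0+\delta-t)]^{-1}$, apply Lemma \ref{lem:ce_elliptic1} slice by slice, and correctly identify the multiplier $s^3\la^4\theta_0(t)^3$ that upgrades the weights, handling the residual factor $e^{-3\la d}$ by its $L^\infty$ bounds. The only point worth making explicit is that the threshold $s_0$ and constant $C_0$ in Lemma \ref{lem:ce_elliptic1} can be taken independent of $\theta_0$ (since $s$ and $\theta_0$ enter only through $s\widetilde{\va}$), so that the pointwise estimate is uniform in $t\in I_\delta$; with this understood, your argument is complete.
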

%-------------------------------------------------------------
%
%	Carleman Estimate for a Third-order Partial Differential Equations
%
%-------------------------------------------------------------
\subsection*{Carleman Estimate for a Third-order Partial Differential Equations}
By modifying Lemma 4.9 in \cite{Kawamoto-Machida20}, 
we may show the following lemma.
\begin{lem}
\label{lem:ce_3rdpde}
Let $\mathbf{p}\in C^1(\overline{\Omega})$. 
We assume that $|\mathbf{p}(x)\cdot \nabla d(x)|\geq m_0$, 
$x\in \overline{\Omega_0}$ with a positive constant $m_0$. 
There there exists $\la_0>0$ such that for any $\la\ge \la_0$, 
we may choose $s_0(\la)>0$ satisfying: 
there exists a constant $C=C(s_0,\la)>0$ such that
\begin{align*}
%\label{eq:ce_3rdpde}
&
\int_\Omega
\Biggl(
s\widetilde{\va}
\sum_{i,j,k=1}^n |\pp_i \pp_j \pp_k v|^2
+
(s\widetilde{\va})^{2}
|\nabla \Delta v|^2
\\
&\qquad
+
(s\widetilde{\va})^{3}
\sum_{i,j=1}^n |\pp_i \pp_j v|^2
+(s\widetilde{\va})^{5}
\left(|\nabla v|^2+ |v|^2 \right)
\Biggr)
e^{2s \widetilde{\psi}}\,dx
\\
&
\leq
C\int_\Omega \left(  
|\nabla (\mathbf{p}  \cdot \nabla \Delta v)|^2 
+ 
|\mathbf{p} \cdot \nabla \Delta v|^2\right) e^{2s \widetilde{\psi}}\,dx,
\end{align*}
for all $s\ge s_0$ and all $v \in H^4(\Omega)$ satisfying 
$v(x)=0$, $x\in D\cup\pp\Omega$. 
\end{lem}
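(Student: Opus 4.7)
The plan is to combine a first-order Carleman estimate for the operator $\mathbf{p}\cdot\nabla$ with iterated applications of the second-order elliptic Carleman estimate in Lemma \ref{lem:ce_elliptic1}. Since $v \equiv 0$ on the open set $D$, the function $v$ together with all of its derivatives up to order three has vanishing trace on $\partial\Omega_0\cup\partial\Omega$, so integration by parts will produce no boundary contributions, and the pseudoconvexity-type condition $|\mathbf{p}\cdot\nabla d|\geq m_0$ on $\overline{\Omega_0}$ is active everywhere $v$ can be nonzero.

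First I would establish a first-order Carleman estimate of the form
\begin{equation*}
\int_\Omega s^2\widetilde{\va}^2|w|^2 e^{2s\widetilde{\psi}}\,dx \leq C\int_\Omega|\mathbf{p}\cdot\nabla w|^2 e^{2s\widetilde{\psi}}\,dx
\end{equation*}
for every $w\in H^1(\Omega)$ that vanishes on $D\cup\partial\Omega$. Setting $W := e^{s\widetilde{\psi}}w$ and expanding $\|e^{s\widetilde{\psi}}\mathbf{p}\cdot\nabla w\|_{L^2(\Omega)}^2 = \|\mathbf{p}\cdot\nabla W - s(\mathbf{p}\cdot\nabla\widetilde{\psi})W\|_{L^2(\Omega)}^2$ and integrating by parts in the cross term, using $\nabla\widetilde{\psi}=\la\widetilde{\va}\nabla d$, yields leading positive contributions $s^2\la^2\int\widetilde{\va}^2(\mathbf{p}\cdot\nabla d)^2 W^2\,dx$ together with $s\la^2\int\widetilde{\va}(\mathbf{p}\cdot\nabla d)^2 W^2\,dx$; the hypothesis $|\mathbf{p}\cdot\nabla d|\geq m_0$ on $\overline{\Omega_0}$ combined with the vanishing of $w$ on $D$ and the largeness of $\la$, $s$ absorbs the remaining perturbations coming from $\nabla\cdot\mathbf{p}$ and $\mathbf{p}\cdot\nabla(\mathbf{p}\cdot\nabla d)$, which yields the claim.

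Next I would apply this first-order estimate with $w=\Delta v$, and separately with $w=\partial_k\Delta v$ (then summing over $k$). For the latter, the commutator $[\partial_k,\mathbf{p}\cdot\nabla]=(\partial_k\mathbf{p})\cdot\nabla$ contributes an $|\nabla\Delta v|^2$ term on the right, which is absorbed into the left for $s\widetilde{\va}$ large, producing
\begin{equation*}
\int_\Omega s^2\widetilde{\va}^2\bigl(|\Delta v|^2+|\nabla\Delta v|^2\bigr)e^{2s\widetilde{\psi}}\,dx \leq C\int_\Omega\bigl(|\mathbf{p}\cdot\nabla\Delta v|^2+|\nabla(\mathbf{p}\cdot\nabla\Delta v)|^2\bigr)e^{2s\widetilde{\psi}}\,dx.
\end{equation*}
Then I would invoke Lemma \ref{lem:ce_elliptic1} with $\widetilde{a}\equiv 1$, $\widetilde{\mathbf{b}}\equiv 0$, $\widetilde{c}\equiv 0$ applied first to $v$, viewed as a solution of the Poisson equation with right-hand side $\Delta v$, which yields
\begin{equation*}
\int_\Omega\bigl[(s\widetilde{\va})^{-1}\textstyle\sum_{i,j}|\partial_i\partial_j v|^2+s\la^2\widetilde{\va}|\nabla v|^2+s^3\la^4\widetilde{\va}^3|v|^2\bigr]e^{2s\widetilde{\psi}}\,dx\leq C\int_\Omega|\Delta v|^2 e^{2s\widetilde{\psi}}\,dx,
\end{equation*}
and then applied to each $\partial_k v$, summed in $k$, to obtain the analogous estimate with the triple sum $\textstyle\sum_{i,j,k}|\partial_i\partial_j\partial_k v|^2$ on the left and $\int_\Omega|\nabla\Delta v|^2 e^{2s\widetilde{\psi}}\,dx$ on the right. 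Substituting the first-order bounds on $\int|\Delta v|^2 e^{2s\widetilde{\psi}}$ and $\int|\nabla\Delta v|^2 e^{2s\widetilde{\psi}}$ into these two elliptic estimates, multiplying through by suitable powers of $s$, and using that $\widetilde{\va}$ is bounded above and below on $\overline{\Omega}$ for fixed $\la$ (so that $(s\widetilde{\va})^k$ and $s^k$ are equivalent up to $\la$-dependent constants), every term on the left-hand side of the desired inequality is recovered with its advertised weight $(s\widetilde{\va})^1$, $(s\widetilde{\va})^2$, $(s\widetilde{\va})^3$, or $(s\widetilde{\va})^5$.

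The main obstacle is the first-order Carleman estimate itself: one must verify that the positive principal part $s^2\la^2\widetilde{\va}^2(\mathbf{p}\cdot\nabla d)^2|W|^2$ from the direct square, together with the subleading $s\la^2\widetilde{\va}(\mathbf{p}\cdot\nabla d)^2|W|^2$ term recovered from the cross product via integration by parts, dominates all error terms stemming from $\nabla\cdot[(\mathbf{p}\cdot\nabla\widetilde{\psi})\mathbf{p}]$ and from $\mathbf{p}\in C^1(\overline{\Omega})$ being only once differentiable. Once that estimate is in hand, the remaining work is the routine orchestration of substitutions and weight bookkeeping described above.
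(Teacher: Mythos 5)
Your proposal is essentially correct, and it is worth noting that the paper itself gives no proof of this lemma at all: it only remarks that the statement follows ``by modifying Lemma 4.9 in \cite{Kawamoto-Machida20}.'' So you have in effect supplied the missing argument rather than reproduced one. Your route --- a first-order transport Carleman estimate for $\mathbf{p}\cdot\nabla$ (whose positivity rests on $|\mathbf{p}\cdot\nabla d|\ge m_0$ on $\overline{\Omega_0}$ together with $\supp w\subset\overline{\Omega_0}$) applied to $w=\Delta v$ and $w=\partial_k\Delta v$, chained with Lemma \ref{lem:ce_elliptic1} applied to $v$ and to each $\partial_k v$ --- is sound, and the two places where one might worry both check out. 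First, the integrations by parts produce no boundary terms because $v=0$ a.e.\ on the open set $D$ forces all relevant traces of $v,\nabla v,\Delta v,\nabla\Delta v$ to vanish on $\partial\Omega$, and $\supp v\subset\overline{\Omega_0}\Subset\Omega$ makes $v$ and $\partial_k v$ admissible in Lemma \ref{lem:ce_elliptic1}. Second, your bookkeeping step of ``multiplying through by suitable powers of $s$'' is legitimate precisely because $s$ is a constant and because the stated conclusion only requires $C=C(s_0,\la)$: since $\widetilde{\va}=\theta_0 e^{\la d}$ is bounded above and below on $\overline{\Omega}$ by $\la$-dependent constants, one may trade $s^{k}\widetilde{\va}^{j}$ for $(s\widetilde{\va})^{m}$ freely (e.g.\ $s^{2}\cdot\tfrac1{s\widetilde{\va}}\sum_{i,j,k}|\pp_i\pp_j\pp_k v|^2\ge c_\la\, s\widetilde{\va}\sum_{i,j,k}|\pp_i\pp_j\pp_k v|^2$ and $s^{2}\cdot s^{3}\widetilde{\va}^{3}|v|^2\ge c_\la (s\widetilde{\va})^{5}|v|^2$), at the cost of losing uniformity in $\la$, which the lemma does not claim. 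The one refinement I would insist on making explicit is the commutator step: for $w=\partial_k\Delta v$ the right-hand side of your first-order estimate is $|\partial_k(\mathbf{p}\cdot\nabla\Delta v)-(\partial_k\mathbf{p})\cdot\nabla\Delta v|^2$, and the term $C\int|\nabla\Delta v|^2e^{2s\widetilde{\psi}}\,dx$ must be absorbed into $s^2\int\widetilde{\va}^2|\nabla\Delta v|^2e^{2s\widetilde{\psi}}\,dx$ on the left for $s$ large, exactly as you indicate; this uses only $\mathbf{p}\in C^1(\overline{\Omega})$ and $\widetilde{\va}\ge\theta_0>0$, so the regularity hypothesis is sufficient.
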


%RRRRRRRRRRRRRRRRRRRRRRRRRRRRRRRRRRRRRRRRRRRRRRRRRRRRRRR
%
%	REFERENCES
%
%RRRRRRRRRRRRRRRRRRRRRRRRRRRRRRRRRRRRRRRRRRRRRRRRRRRRRRR


\begin{thebibliography}{99}

\bibitem{Adams-Gelhar92}
{\sc E. E. Adams and L. W. Gelhar},
{\em Field study of dispersion in a heterogeneous aquifer: 2. Spatial moments analysis},  
Water Resources Research \textbf{28} (1992), 3293--3307.

\bibitem{Bukhgeim-Klibanov81}
{\sc A. L. Bukhgeim and M. V. Klibanov}, 
{\em Uniqueness in the large of a class of multidimensional inverse problems}, 
Dokl. Akad. Nauk SSSR, \textbf{260} (1981), 269--272 (in Russian).

\bibitem{Fursikov-Imanuvilov96}
{\sc A. V. Fursikov and O. Y. Imanuvilov}, 
{\em Controllability of Evolution Equations}, 
(Lecture Notes Series vol. 34) Seoul National University, Seoul (Korea), 1996.

\bibitem{Guerrero-Kassab19}
{\sc S. Guerrero and K. Kassab}, 
{\em Carleman estimate and null controllability of a fourth order parabolic equation in dimension $N\geq2$}, 
J. Math. Pures Appl. \textbf{121} (2019), 135--161

\bibitem{Hatano-Hatano98}
{\sc Y. Hatano and N. Hatano}, 
{\em Dispersive transport of ions in column experiments: an explanation of long-tailed profiles}, 
Water Resources Research \textbf{34} (1998), 1027--1033.

\bibitem{Huang-Li-Yamamoto19}
{\sc X. Huang, Z. Li and M. Yamamoto}, 
{\em Carleman estimates for the time-fractional advection-diffusion equations and applications}, 
Inverse Probl., \textbf{35} (2019), Article ID 045003.

\bibitem{Imanuvilov95}
{\sc O. Y. Imanuvilov}, 
{\em Controllability of parabolic equations}, 
Sbornik Math., \textbf{186} (1995), 879--900. 

\bibitem{Imanuvilov-Yamamoto98}
{\sc O. Y. Imanuvilov and M. Yamamoto}, 
{\em Lipschitz stability in inverse parabolic problems by the Carleman estimate}, 
Inverse Problems, \textbf{14} (1998), 1229--1245.

\bibitem{Isakov06}
{\sc V. Isakov}, 
{\em Inverse Problems for Partial Differential Equations}, 2nd ed.,
Springer-Verlag, Berlin, 2006.

\bibitem{Kawamoto18}
{\sc A. Kawamoto}, 
{\em Lipschitz stability estimates in inverse source problems for a fractional diffusion equation of half order in time by Carleman estimates}, 
J. Inverse Ill-Posed Probl., \textbf{26} (2018), 647--672.

\bibitem{Kawamoto-Machida20}
{\sc A. Kawamoto and M. Machida}, 
{\em Lipschitz Stability in Inverse Source and Inverse Coefficient Problems for a First- and Half-order Time-fractional Diffusion Equation}, 
SIAM J. Math. Anal., \textbf{52}  (2020), 967--1005. 

\bibitem{Klibanov92}
{\sc M. V. Klibanov}, 
{\em Inverse problems and Carleman estimates}, 
Inverse Problems, \textbf{8} (1992), 575--596.

\bibitem{Klibanov13}
{\sc M. V. Klibanov}, 
{\em Carleman estimates for global uniqueness, stability and numerical methods for coefficient inverse problems}, 
J. Inverse Ill-Posed Probl., \textbf{21} (2013), 477--560.

\bibitem{Klibanov-Timonov04}
{\sc M. V. Klibanov and A. A. Timonov}, 
{\em Carleman Estimates for Coefficient Inverse Problems and Numerical Applications}, 
VSP, Utrecht, 2004.

\bibitem{Liu-Li-Yamamoto19}
{\sc Y. Liu, Z. Li, M. Yamamoto},  
{\em Inverse problems of determining sources of the fractional partial differential equations},  
In: A. N. Kochubei and Y. Luchko, editors. Handbook of Fractional Calculus with Applications. Vol.2, 
Fractional Differential Equations, De Gruyter, Berlin, 2019. 411--430.

\bibitem{Liu-Yamamoto19}
{\sc Li Z, Yamamoto M,} 
{\em Inverse problems of determining coefficients of the fractional partial differential equations},  
In: A. N. Kochubei and Y. Luchko, editors. Handbook of Fractional Calculus with Applications. Vol.2, 
Fractional Differential Equations, De Gruyter, Berlin, 2019. 443--464. 

\bibitem{Metzler-Klafter00}
{\sc R. Metzler and J. Klafter}, 
{\em The random walk’s guide to anomalous diffusion: a fractional dynamics approach}, 
Phys. Rep. 339 (2000), 1--77.

\bibitem{Ren-Xu14}
{\sc C. Ren and X. Xu}, 
{\em Local stability for an inverse coefficient problem of a fractional diffusion equation}, 
Chin. Ann. Math.Ser. B, \textbf{35} (2014), 429--446.

\bibitem{Xu-Cheng-Yamamoto11}
{\sc X. Xu, J. Cheng and M. Yamamoto}, 
{\em Carleman estimate for fractional diffusion equation with half order and application}, 
Appl. Anal., \textbf{90} (2011), 1355--1371.

\bibitem{Yamamoto09}
{\sc M. Yamamoto}, 
{\em Carleman estimates for parabolic equations and applications}, 
Inverse Problems, \textbf{25} (2009), Article ID 123013.

\bibitem{Yamamoto-Zhang12}
{\sc M. Yamamoto and Y. Zhang}, 
{\em Conditional stability in determining a zeroth-order coefficient in a half-order fractional diffusion equation by a Carleman estimate}, 
Inverse Problems, \textbf{28} (2012), Article ID105010.


\end{thebibliography}
\end{document}